\title[]{The Chern-Weil homomorphism for deformed Hopf-Galois extensions}
\author{Jacopo Zanchettin}
\address{\textit{SISSA, Via Bonomea 265, 34136 Trieste, Italy}}
\email{jzanchet@sissa.it}
\date{}
\numberwithin{equation}{section}
\theoremstyle{plain} 
\newtheorem{thm}{Theorem}[section] 
\newtheorem{cor}[thm]{Corollary} 
\newtheorem{prop}[thm]{Proposition} 
\newtheorem{defn}[thm]{Definition}
\newtheorem{expl}[thm]{Example}
\newtheorem{remark}[thm]{Remark}
\DeclareMathOperator{\id}{\mathrm{id}}
\DeclareMathOperator{\Hom}{\mathrm{Hom}}
\DeclareMathOperator{\can}{\mathrm{can}}
\DeclareMathOperator{\chw}{\mathrm{chw}}
\newcommand{\1}{\langle 1 \rangle}
\newcommand{\2}{\langle 2 \rangle}
\newcommand{\As}{[A\square^H-]}
\newcommand{\beq}{\begin{equation}}
	\newcommand{\eeq}{\end{equation}}
\newcommand{\beqn}{\begin{equation*}}
	\newcommand{\eeqn}{\end{equation*}}
\newcommand\bK{{\mathbb K}}
\newcommand\bN{{\mathbb N}}
\newcommand\bR{{\mathbb R}}
\newcommand\bZ{{\mathbb Z}}
\newcommand\cA{{\mathcal A}}
\newcommand\cB{{\mathcal B}}
\newcommand\cQ{{\mathcal Q}}
\newcommand\fM{{\mathfrak M}}
\newcommand\fS{{\mathfrak S}}
\newcommand\ff{{\mathfrak f}}
\newcommand\fg{{\mathfrak g}}
\begin{document}
	\maketitle
	
	\begin{abstract}
		In this article, we study the Chern-Weil theory for Hopf-Galois extensions originally introduced by Hajac and Maszczyk in the context of coalgebra extensions. We show that the cyclic homology Chern-Weil homomorphism defines natural transformations between Hopf-Galois extensions with a strong connection (principal comodule algebras) and cyclic homology, thereby generalizing the concept of characteristic classes to the noncommutative setting. In the second part, we study the effect of $2$-cocycle deformations of Hopf-Galois extensions on the aforementioned homomorphism. We consider the $2$-cocycle coming from the structure Hopf algebra of the extension, an external symmetry, and finally the combined case.
	\end{abstract}
	
	\tableofcontents
	
	\section*{Introduction}
	The classical Chern-Weil homomorphism\footnote{We refer to the standard textbook \cite{kobayashi1996foundations} and references therein, including the original one from Cartan, for an extensive treatment of this subject.} allows one to find characteristic classes of principal and vector bundles. The latter are global invariants of the base manifold $X$ of the bundle and they can be used to prove the non-triviality of the bundle: if some class is non-trivial, so is the bundle. This is a consequence of the fact that all characteristic classes of a trivial bundle are trivial. The Chern-Weil homomorphism computes characteristic classes in terms of curvature form of connections and takes values in the de Rham cohomology of the base manifold $H_{dR}^*(X)$. Given a principal $G$-bundle on $X$ ($G$ is a Lie group), the curvature form of a connection $\nabla$ is a Lie algebra-valued $2$-form $\Omega_{\nabla}$. Let $\bK$ be a field, if we denote by $\bK[\fg]^G$ ring of adjoint-invariant $\bK$-valued polynomials of the Lie algebra $\fg$, the Chern-Weil homomorphism is well-defined algebra morphism
	\begin{equation*}
		\mathrm{Chw}:\bK[\fg]^G\longrightarrow H_{dR}^{\textit{even}}(X),\quad p\longmapsto \left[p(\Omega_{\nabla})\right].
	\end{equation*}

	In the context of noncommutative geometry, Hajac and Maszczyk developed the Chern-Weil theory for coalgebra extensions \cite{hajac2021cyclic}. In this paper we focus on a particular case of such extensions, those of Hopf-Galois. Here we have a Hopf algebra $H$ (co)acting on an algebra $A$ with (co)action invariant subalgebra $B$ such that the canonical map  
	\begin{equation*}
		\can:A\otimes_B A\longrightarrow A\otimes H,
	\end{equation*}
	is bijective. These extensions are thought of as noncommutative affine principal bundles \cite{schneider1990principal}, the group's action is dualized by the (co)action of $H$ on $A$, and the freeness and transitivity on the fibers of the group action are encoded in the requirement that the canonical map is bijective.
	
	In this setting, the Chern-Weil homomorphism is built using a strong connection associated with the Hopf-Galois extension \cite{hajac1996strong,dkabrowski2001strong}, and it takes value in the cyclic homology of the base algebra $HC_*(B)$. For commutative algebras, the cyclic homology is computed by the de Rham cohomology \cite{connes1994noncommutative}; for this reason, cyclic homology is thought of as noncommutative de Rham cohomology. There are several other ways to generalize the Chern-Weil theory in the literature. One can look at the introduction of \cite{hajac2021cyclic} for an exhaustive summary.
	
	In this paper, we study how the Chern-Weil theory is affected by $2$-cocycle deformations of Hopf-Galois extensions that have been developed in a categorica way in \cite{Aschieri_2016}. This type of deformation is dual to Drinfeld twists \cite{majid_1995}, and it is a powerful tool since it gives a method to get noncommutative extensions starting from commutative ones. Notable examples are the $\mathcal{O}(SU(2))$-extension $\mathcal{O}(S_{\theta}^4)\subseteq \mathcal{O}(S_{\theta}^7)$ first studied in \cite{landi2005principal}, and $SO_{\theta}(2n,\bR)$-extension over $\mathcal{O}(S^{2n}_{\theta})$.
	
	The outline of the work is the following: in the first section, we recall the basic algebraic notions of comodule algebras and the theory of principal comodule algebras. Moreover, we review the $2$-cocycle deformation theory, giving an explicit formula for the strong connections \eqref{eq:strong_conn_def_right}, \eqref{eq:strong_conn_left} and \eqref{eq:strong_conn_both_deformed}. In the second section, we prove the naturality of characteristic classes defined by the Chern-Weil homomorphism for principal comodule algebras in Proposition \ref{prop:pushforward_Chern-Weil}, generalizing the classical result. Recalling the latter, if we denote by $\cB un_G(X)$ the category of isomorphism classes of principal $G$-bundles the manifold $X$, then the Chern-Weil homomorphism defines a map from this category to the cohomology category $H^{even}_{dR}(X)$ for any fixed element $p\in\bK[\fg]^G$, i.e. $c_X:\cB un_G(X)\longrightarrow H^{even}_{dR}(X),\quad [P]\longmapsto [p(\Omega_{\nabla_{P}})]$. For any principal $G$-bundle morphism $(F,f)$\footnote{Recall that the two maps satisfy $f\circ\pi'=\pi\circ F$, where $\pi':P'\longrightarrow X'$ and $\pi:P\longrightarrow X$ are the projections of the $G$-bundles. Therefore, the map on the base spaces $f$ is uniquely determined by the map on the principal spaces $F$.}, $F:P'\longrightarrow P$ and $f:X'\longrightarrow X$, one has $\cB un_G(f)=f^*$ is the pullback and maps the class of $P$ in $\cB un_G(X)$ in that of $f^*P$, and the following diagram is commutative
	\begin{equation*}
		\begin{tikzcd}
			\cB un_G(X) \arrow[rr, "\cB un_G(f)"] \arrow[dd, "c_X"'] & & \cB un_G(X') \arrow[dd, "c_{X'}"]\\
			& & \\
			H^{even}_{dR}(X) \arrow[rr, "H^{even}_{dR}(f)"] & &  H^{even}_{dR}(X') 
		\end{tikzcd}
	\end{equation*}
	The proof of this fact is tantamount to proving that the Chern-Weil homomorphism $\mathrm{Cwh}_P$ associated to $P\longrightarrow X$ satisfies $\mathrm{Cwh_{f^*P}}=f^*\circ\mathrm{Cwh}_P$ for any principal $G$-bundle morphism $(F,f)$ as above. Finally, in the last section, we study the Chern-Weil homomorphism of \cite{hajac2021cyclic} under $2$-cocycle deformations. The main results of this section are \ref{prop:Chern-Weil_deform_right} and \ref{prop:deforma_Chern-Weil_external}. The first shows that the map is unchanged under a deformation coming from a $2$-cocycle of the structure Hopf algebra. Geometrically, in this situation, only the fibers are deformed, and the base manifold remains the same. In the second, we show that the Chern-Weil homomorphism is deformed and we give an explicit formula for it. Here, both the fibers and the base manifold are deformed in the geometric picture. We then analyze what happens when both deformations are performed and we also discuss the naturality of characteristic classes in the context of $2$-cocycle deformations \ref{prop:deformed_pullback}.

	\section{Preliminaries}
	In this section we recall the basic notions of comodule, comodule algebra, and Hopf-Galois extension with a strong connection, mostly to fix our notation. In the last two subsections we find explicit formulas for a strong connecion of a $2$-cocycle deformed principal comodule algebra. 
	
	We work over a fixed field $\mathbb{K}$ and put $\otimes:=\otimes_{\mathbb{K}}$. Algebras and Hopf algebras are assumed to be unital and counital, as well as associative and coassociative. We denote the unit with $1_{-}$ and counit $\epsilon_{-}$, for the produt we use juxtaposition and for the coproduct Sweedler notation $\Delta(h):=h_{(1)}\otimes h_{(2)}$ where $h$ is any element in a Hopf algebra $H$. The antipode is denoted by $S$ and in what follows is assumed to be invertible.
	
	\subsection{Principal comodule algebras}  
	A \textit{right $H$-comodule} is a vector space $V$ with map $\rho_V:V\longrightarrow V\otimes H$ satisfying $(\rho_V\otimes\mathrm{id}_H)\circ\rho_V=(\mathrm{id}_V\otimes\Delta)\circ\rho_V$ and $(\mathrm{id}_V\otimes\epsilon)\circ\rho_V=\mathrm{id}_V$ that is called \textit{coaction}. We use the notation $\rho_V(v):=v_{(0)}\otimes v_{(1)}$. a \textit{morphism} of right $H$-comodules is a linear map $f:V\longrightarrow W$ such that $\rho_W\circ f=(f\otimes\id_H)\circ\rho_V$. We denote by $\fM^H$ the category of right $H$-comodules. The latter is a \textit{monoidal} category, the tensor product $V\otimes W$ of two $H$-comodules is a $H$-comodule with the \textit{diagonal coaction} $\rho^{\otimes}(v\otimes w)=v_{(0)}\otimes w_{(0)}\otimes v_{(1)}w_{(1)}$. The definitions for the category of \textit{left} $H$-comodules $\tensor[^H]{\fM}{}$ are similar. Using the bijectivity of the antipode $S$ of $H$, any $V\in\fM^H$ is also a left $H$-comodule, being the map $\lambda_{V}(v)=S^{-1}(v_{(1)})\otimes v_{(0)}$ a left coaction. If $V$ is a right $H$-comodule with coaction $\rho$ and $W$ a left $H$-comodule with coaction $\lambda$, we can form the so-called \textit{cotensor product}
	\beq
	\label{eq:cotensor_prod}
	V\square^H W:=\{v\otimes w\in V\otimes W|\rho(v)\otimes w=v\otimes\lambda(w)\}. 
	\eeq
	This operation is a bi-functor $\fM^H\times\tensor[^H]{\fM}{}\longrightarrow\mathrm{Vect}$ into the category of vector spaces. 
	\begin{expl}
		\label{expl:adjoint_coaction}
		Any Hopf algebra $H$ is a right $H$-comodule if endowed with the product $\rho:=\Delta$. Moreover, one can define the \textit{adjoint coaction}
		\beq
		\label{eq:adjoint_coaction}
		\mathrm{Ad}:H\longrightarrow H\otimes H,\quad h\longmapsto h_{(2)}\otimes S(h_{(1)})h_{(3)}.
		\eeq
		$H$ equipped with this right coaction is denote by $\underline{H}$.    
	\end{expl}
	
	An algebra $A$ is a \textit{right $H$-comodule algebra} if it is a right $H$-comodule and the coaction $\rho_A$ is an algebra morphism. The sub-algebra of \textit{coaction invariant elements} is denoted by $B:=\{b\in A|\rho_A(b)=b\otimes 1_H\}$. A morphism of right $H$-comodule algebras is a right $H$-comodule morphism that is also an algebra morphism. The category of right $H$-comodule algebras is denoted by $\cA^H$.
	
	Given a right $H$-comodule algebra $A$, we say that $B\subseteq A$ is a \textit{$H$-Hopf-Galois extension} if the canonical map
	\begin{equation*}
		\mathrm{can}:A\otimes_B A\longrightarrow A\otimes H, \quad a\otimes_B\tilde{a}\longrightarrow a\tilde{a}_{(0)}\otimes \tilde{a}_{(1)} 
	\end{equation*}    
	is bijective, where $A\otimes_BA$ is the balanced tensor product. A Hopf-Galois extension such that $A$ is a faithfully flat left $B$-module is called a \textit{principal $H$-comodule algebra}. Faithfully flatness of $A$ as left a $B$-module means that the functor $-\otimes_BA$ from the category of right $B$-modules to one of the vector spaces reflects and preserves exact sequences.
	
	A \textit{strong connection} \cite{hajac1996strong,dkabrowski2001strong} on a $H$-Hopf-Galois extension is a linear map $\ell:H\longrightarrow A\otimes A$ such that the following equations hold
	\begin{align}
		\label{eq:strong_conn1}
		(\mathrm{id}_A\otimes\rho)\circ \ell&=(\ell\otimes\mathrm{id}_H)\circ\Delta,\\
		\label{eq:strong_conn2}
		(\lambda\otimes\mathrm{id}_A)\circ \ell&=(\mathrm{id}_H\otimes \ell)\circ\Delta,\\
		\label{eq:strong_conn3}
		\pi_B\circ \ell&=\tau.
	\end{align}
	We have denoted by $\tau$ the translation map \cite{Schneider1990RepresentationTO,Brzezi_ski_1996} of the extension $B\subseteq A$ which is the restrictions
	\begin{equation*}
		\tau:=\can^{-1}|_H:H\longrightarrow A\otimes_B A.
	\end{equation*}
	By definition we then have $\can(\tau(h))=1_A\otimes h,\quad \forall h\in H$.
	
	An extension $B\subseteq A$ admits a strong connection $\ell:H\longrightarrow A\otimes A$ if and only if is a principal $H$-comodule algebra  \cite{schauenburg2005generalized,dkabrowski2001strong}. For the latter we use the notation $\ell(h):=h^{\1}\otimes h^{\2}$, so that the identities \eqref{eq:strong_conn1}-\eqref{eq:strong_conn3} read
	\begin{align}
		\label{eq:strong_conn_Sw1}
		h^{\1}\otimes \tensor{h}{^{\2}_{(0)}}\otimes\tensor{h}{^{\2}_{(1)}}&=\tensor{h}{_{(1)}^{\1}}\otimes \tensor{h}{_{(1)}^{\2}}\otimes h_{(2)},\\
		\label{eq:strong_conn_Sw2}
		\tensor{h}{^{\1}_{(0)}}\otimes h^{\2}\otimes\tensor{h}{^{\1}_{(1)}}&=\tensor{h}{_{(2)}^{\1}}\otimes \tensor{h}{_{(2)}^{\2}}\otimes S(h_{(1)}),\\
		\label{eq:strong_conn_Sw3}
		(\can\circ\pi_B\circ\ell)(h)&=1_A\otimes h.
	\end{align} 
	If we apply the map $\id_A\otimes\epsilon$ on both sides of \eqref{eq:strong_conn_Sw3}, we get the equation
	\begin{equation*}
		h^{\1}h^{\2}=\epsilon(h)1_A,\quad \forall h\in H.
	\end{equation*}
	Here $\pi_B$ is the quotient map from $A\otimes A$ onto $A\otimes_B A$, thus \eqref{eq:strong_conn_Sw3} tells us that a strong connection is a lift of the translation map to the whole $A\otimes A$.
	
	There is a special class of principal comodule algebras, called \textit{cleft} extensions, that are characterized by having a unital, convolution invertible map $\phi: H\longrightarrow A$ called \textit{cleaving map}. This condition is tantamount to the \textit{normal basis} property of the extension, i.e. $A\simeq B\otimes H$ as left $B$-module and right $H$-comodule \cite{Montgomery1993HopfAA}. The strong connection for cleft extensions is given by
	\begin{equation}
		\label{eq:strong_conn_cleft}
		\ell(h)=\phi^{-1}(h_{(1)})\otimes\phi(h_{(2)}),\quad h\in H.
	\end{equation}
	
	\subsection{Deformations via $2$-cocycles}
	We briefly review the concept of $2$-cocycle deformation of Hopf algebras, which is dual to the Drinfeld twist \cite[Chapter 2]{majid_1995}, and the related deformation theory of comodule algebras.
	
	Recall that for every Hopf algebra $H$ the tensor product $H\otimes H$ is a bialgebra with component-wise multiplication and unit, and comultiplication $\Delta^{\otimes}(h\otimes k)=h_{(1)}\otimes k_{(1)}\otimes h_{(2)}\otimes k_{(2)}$ and counit $\epsilon^{\otimes}(h\otimes k)=\epsilon(h)\epsilon(k)$. A \textbf{$2$-cocycle} is a map $\gamma:H\otimes H\longrightarrow\bK$ that is convolution invertible and satisfies the conditions
	\begin{align}
		\label{eq:cocycle_1}
		\gamma(h_{(1)}\otimes k_{(1)})\gamma(h_{(2)}k_{(2)}\otimes l)&=\gamma(k_{(1)}\otimes l_{(1)})\gamma(h\otimes k_{(2)}l_{(2)}),\\
		\label{eq:cocycle_2}
		\gamma(h\otimes 1_H)&=\gamma(1_H\otimes h)=\epsilon(h),
	\end{align}
	for all $h,k,l\in H$. For writing convenience, we use the notation $\gamma(h,k):=\gamma(h\otimes k)$ for the rest of the paper. The following is the dual version of Theorem 2.3.4 in \cite{majid_1995}.
	
	
	\begin{prop}
		\label{prop:deformation_Hopf_alg}
		Given a $2$-cocycle $\gamma:H\otimes H\longrightarrow\bK$ of a Hopf algebra $H$, the equation
		\begin{equation*}
			h\cdot_{\gamma}k:=\gamma(h_{(1)}, k_{(1)})h_{(2)}k_{(2)}\gamma^{-1}(h_{(3)}, k_{(3)}),\quad h,k\in H
		\end{equation*}
		defines a new associative multiplication on $H$, the resulting algebra is denoted by $H_{\gamma}$. The latter is a Hopf algebra with the comultiplication and counit inherited from $H$ and antipode
		\begin{equation*}
			S_{\gamma}(h)=u_{\gamma}(h_{(1)})S(h_{(2)})u^{-1}_{\gamma}(h_{(3)}),\quad h\in H_{\gamma},
		\end{equation*}
		where $u_{\gamma}:H\longrightarrow\bK$ is the convolution invertible map given by $u_{\gamma}(h)=\gamma(h_{(1)}, S(h_{(2)}))$. Moreover, if $S$ is bijective so is $S_{\gamma}$, its inverse given by
		\begin{equation*}
			S^{-1}_{\gamma}(h)=v_{\gamma}(h_{(1)})S^{-1}(h_{(2)})v^{-1}_{\gamma}(h_{(3)}),\quad h\in H_{\gamma}
		\end{equation*}
		where $v_{\gamma}:H\longrightarrow\mathbb{K}$ mapping $h\longmapsto\gamma(h_{(2)},S^{-1}(h_{(1)}))$ is convolution invertible.
	\end{prop}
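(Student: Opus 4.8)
The plan is to verify each assertion in Proposition \ref{prop:deformation_Hopf_alg} by a direct computation, using the $2$-cocycle conditions \eqref{eq:cocycle_1}-\eqref{eq:cocycle_2} together with the coassociativity of $\Delta$ and the fact that $\gamma$ is convolution invertible. Since this is the dual of Theorem 2.3.4 in \cite{majid_1995}, the strategy is essentially to dualize the twist computations, but I would still spell out the key manipulations because the Sweedler bookkeeping is delicate.

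First I would establish associativity of $\cdot_{\gamma}$. Expanding $(h\cdot_{\gamma}k)\cdot_{\gamma}l$ and $h\cdot_{\gamma}(k\cdot_{\gamma}l)$ in Sweedler notation produces expressions with two $\gamma$-factors and two $\gamma^{-1}$-factors each, together with the triple product of the middle legs. The inner $\gamma^{-1}$ in one grouping and the outer $\gamma$ in the other can be matched by first rewriting everything so that all $\gamma$'s appear on the left and all $\gamma^{-1}$'s on the right; this is legitimate because $\bK$ is central. Then one applies the cocycle identity \eqref{eq:cocycle_1} to the pair of $\gamma$'s and, after using the antipode-free fact that $\gamma^{-1}$ satisfies the ``mirror'' cocycle identity (obtained by convolution-inverting \eqref{eq:cocycle_1}), to the pair of $\gamma^{-1}$'s. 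The normalization \eqref{eq:cocycle_2} is only needed to check that $1_H$ remains a unit for $\cdot_{\gamma}$, which is immediate. That $\Delta$ and $\epsilon$ remain an algebra map resp. counit for $(H,\cdot_{\gamma})$ follows because $\Delta$ is multiplicative for the original product and $\gamma(h,k)$ lands in $\bK$: applying $\Delta$ to $h\cdot_{\gamma}k$ and using coassociativity to split the six legs gives exactly $\Delta(h)\cdot_{\gamma}^{\otimes}\Delta(k)$ after recognizing that $\gamma\otimes\gamma$-type factors recombine; counitality is a short check using \eqref{eq:cocycle_2}.

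Next I would verify that $S_{\gamma}(h)=u_{\gamma}(h_{(1)})S(h_{(2)})u_{\gamma}^{-1}(h_{(3)})$ is a (two-sided) antipode for $H_{\gamma}$. The computation $S_{\gamma}(h_{(1)})\cdot_{\gamma}h_{(2)}=\epsilon(h)1_H$ expands to a product of $\gamma$'s, $u_{\gamma}$'s, $u_{\gamma}^{-1}$'s and the term $S(h_{(\cdot)})h_{(\cdot)}$; the heart of the matter is the identity relating $u_{\gamma}$ to $\gamma$, namely that $\gamma(h_{(1)},S(h_{(2)}))$ behaves as a ``gauge'' intertwining $\gamma$ and the convolution inverse of $\gamma\circ(S\otimes S)\circ\mathrm{flip}$ — this is precisely where one uses \eqref{eq:cocycle_1} with one of the slots specialized via the antipode axiom $h_{(1)}S(h_{(2)})=\epsilon(h)1_H$. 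I expect \textbf{this antipode computation to be the main obstacle}: one must choose the right specialization of the cocycle identity and track on the order of half a dozen Sweedler legs, and the check for $S_{\gamma}^{-1}$ requires the analogous lemma for $v_{\gamma}(h)=\gamma(h_{(2)},S^{-1}(h_{(1)}))$, obtained by running the same argument with $S^{-1}$ in place of $S$ and using that $S^{-1}$ is an anti-coalgebra map. Finally, convolution invertibility of $u_{\gamma}$ and $v_{\gamma}$ is inherited from that of $\gamma$ (compose with the coalgebra maps $h\mapsto h_{(1)}\otimes S(h_{(2)})$ resp. $h\mapsto h_{(2)}\otimes S^{-1}(h_{(1)})$), and the relation $S_{\gamma}\circ S_{\gamma}^{-1}=\mathrm{id}=S_{\gamma}^{-1}\circ S_{\gamma}$ then follows either by a direct cancellation of the gauge factors or, more cleanly, from uniqueness of the antipode once one knows $S_{\gamma}^{-1}$ is a convolution inverse of $S_{\gamma}$ in $\mathrm{End}(H_{\gamma})$.
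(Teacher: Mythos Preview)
The paper does not actually prove this proposition: it is stated without proof as ``the dual version of Theorem 2.3.4 in \cite{majid_1995}'' and used as background. There is therefore nothing in the paper to compare your argument against beyond that citation; your direct-verification outline is the standard route and is essentially what dualizing Majid's twist argument amounts to.

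One small point worth tightening: your justification that $u_{\gamma}$ (and $v_{\gamma}$) is convolution invertible ``because it is $\gamma$ composed with a coalgebra map'' does not go through as stated. The map $h\mapsto h_{(1)}\otimes S(h_{(2)})$ is \emph{not} a coalgebra morphism $H\to H\otimes H$ for the tensor coproduct (the antipode reverses the comultiplication), so invertibility of $u_{\gamma}$ is not a purely formal consequence of invertibility of $\gamma$. One really has to exhibit the inverse, $u_{\gamma}^{-1}(h)=\gamma^{-1}(S(h_{(1)}),h_{(2)})$, and check $u_{\gamma}*u_{\gamma}^{-1}=\epsilon=u_{\gamma}^{-1}*u_{\gamma}$ using the cocycle identity \eqref{eq:cocycle_1} with an antipode inserted; the analogous formula with $S^{-1}$ handles $v_{\gamma}$. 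This is routine and does not affect the rest of your plan.
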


	Now let $V$ be a right $H$-comodule. Since the deformation of $H_{\gamma}$ of $H$ by a $2$-cocycle does not involve the coalgebra structure, we have that $V$ is a right $H_{\gamma}$-comodule. The coaction is unchanged. Taking another right $H$-comodule $W$ with corresponding right $H_{\gamma}$-comodule, we have the tensor product $V_{\gamma}\otimes^{\gamma}W_{\gamma}$ is the vector space $V\otimes W$ endowed with the diagonal right $H_{\gamma}$-coaction
	\beq
	\label{eq:diagonal_right_coaction_gamma}
	\rho^{\otimes^{\gamma}}:V_{\gamma}\otimes^{\gamma}W_{\gamma}\longrightarrow V_{\gamma}\otimes^{\gamma}W_{\gamma}\otimes H_{\gamma},\quad v\otimes^{\gamma}w\longmapsto v_{(0)}\otimes^{\gamma}w_{(0)}\otimes v_{(1)}\cdot_{\gamma}w_{(1)}.
	\eeq
	So we have just seen that the $2$-cocycle $\gamma$ induces a functor from $\fM^H$, the category of right $H$-comodules, to $\fM^{H_{\gamma}}$ the category of right $H_{\gamma}$-comodules, and that the latter is also monoidal. Moreover, we have the following \cite[Theorem 2.19]{Aschieri_2016}
	\begin{thm}[]
		\label{thm:comodule_equiv_2-co}
		The functor $\Gamma:\fM^H\longrightarrow \fM^{H_{\gamma}}$ mapping $V\longmapsto V_{\gamma}$  is an equivalence of (monoidal) categories, the linear map
		\begin{equation*}
			\alpha_{V,W}:V_{\gamma}\otimes^{\gamma}W_{\gamma}\longrightarrow (V\otimes W)_{\gamma},\quad v\otimes^{\gamma}w\longmapsto v_{(0)}\otimes w_{(0)}\gamma^{-1}(v_{(1)}, w_{(1)})
		\end{equation*}
		is a right $H_{\gamma}$-comodule isomorphism with inverse
		\begin{equation*}
			\alpha^{-1}_{V,W}:(V\otimes W)_{\gamma}\longrightarrow V_{\gamma}\otimes^{\gamma}W_{\gamma} ,\quad v\otimes w\longmapsto v_{(0)}\otimes^{\gamma} w_{(0)}\gamma(v_{(1)}, w_{(1)}).
		\end{equation*}
	\end{thm}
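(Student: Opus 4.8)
The plan is to verify directly that $\alpha_{V,W}$ and $\alpha^{-1}_{V,W}$ are mutually inverse right $H_\gamma$-comodule morphisms, natural in $V$ and $W$, and then invoke Theorem \ref{prop:deformation_Hopf_alg} together with the cocycle conditions \eqref{eq:cocycle_1}--\eqref{eq:cocycle_2} to promote the data to a monoidal equivalence. First I would check that $\alpha_{V,W}$ is a morphism in $\fM^{H_\gamma}$: applying the target coaction $\rho^{\otimes}$ on $(V\otimes W)_\gamma$ to $\alpha_{V,W}(v\otimes^\gamma w)=v_{(0)}\otimes w_{(0)}\,\gamma^{-1}(v_{(1)},w_{(1)})$ and using coassociativity of the coactions of $V$ and $W$, one gets $v_{(0)}\otimes w_{(0)}\otimes v_{(1)}w_{(1)}\,\gamma^{-1}(v_{(2)},w_{(2)})$; on the other side, $(\alpha_{V,W}\otimes\id)\circ\rho^{\otimes^\gamma}$ produces $v_{(0)}\otimes w_{(0)}\,\gamma^{-1}(v_{(1)},w_{(1)})\otimes v_{(2)}\cdot_\gamma w_{(2)}$, and expanding $v_{(2)}\cdot_\gamma w_{(2)}=\gamma(v_{(2)},w_{(2)})\,v_{(3)}w_{(3)}\,\gamma^{-1}(v_{(4)},w_{(4)})$ and cancelling $\gamma^{-1}(v_{(1)},w_{(1)})\gamma(v_{(2)},w_{(2)})$ via the convolution-inverse property yields the same expression. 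That $\alpha$ and $\alpha^{-1}$ compose to the identity in both orders is an immediate application of $\gamma\ast\gamma^{-1}=\gamma^{-1}\ast\gamma=\epsilon\otimes\epsilon$ on $H\otimes H$ together with the counit axioms of the comodules.

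Next I would establish naturality: for comodule morphisms $f\colon V\to V'$, $g\colon W\to W'$, the square relating $\alpha_{V,W}$ and $\alpha_{V',W'}$ commutes because $f,g$ are $H$-colinear, so they intertwine the coactions appearing in the formula, and $\gamma^{-1}$ only sees the $H$-components, which are preserved. Then I would check the coherence (hexagon and unit) axioms making $(\Gamma,\alpha)$ a monoidal functor. For the associativity constraint the computation reduces, after unwinding both bracketings of $\alpha$, to the identity
\begin{equation*}
\gamma^{-1}(v_{(1)},w_{(1)})\,\gamma^{-1}(v_{(2)}w_{(2)},x_{(1)})=\gamma^{-1}(w_{(1)},x_{(1)})\,\gamma^{-1}(v_{(1)},w_{(2)}x_{(2)}),
\end{equation*}
which is precisely the cocycle condition \eqref{eq:cocycle_1} rewritten for $\gamma^{-1}$ (one derives the $\gamma^{-1}$-version from \eqref{eq:cocycle_1} by a standard convolution-inverse manipulation). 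The unit constraints follow from the normalization \eqref{eq:cocycle_2}. Finally, since each $\alpha_{V,W}$ is an isomorphism, $(\Gamma,\alpha)$ is a strong monoidal functor; and $\Gamma$ is an equivalence because the inverse construction via the convolution-inverse cocycle $\gamma^{-1}$ (which is again a $2$-cocycle for $H_\gamma$, with $(H_\gamma)_{\gamma^{-1}}\cong H$) provides a quasi-inverse functor, with the unit and counit of the adjunction built from the $\alpha$'s and the identity on underlying vector spaces.

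The main obstacle I anticipate is bookkeeping: verifying the colinearity of $\alpha_{V,W}$ and the associativity hexagon both require carefully expanding the deformed product $\cdot_\gamma$ inside a coaction and repeatedly applying coassociativity to align Sweedler legs before the cocycle identity can be used, and it is easy to mismatch indices. The conceptually delicate point — rather than difficult — is confirming that $\gamma^{-1}$ is a $2$-cocycle on $H_\gamma$ and that $(H_\gamma)_{\gamma^{-1}}=H$ as Hopf algebras, which is what makes $\Gamma$ an equivalence rather than merely a faithful monoidal functor; this is where one must be careful that the deformed multiplication on $H_\gamma$ is used when forming the convolution algebra in which $\gamma^{-1}$ lives. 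Everything else is a direct, if lengthy, Sweedler-notation computation.
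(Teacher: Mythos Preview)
Your proposal is correct and is the standard direct verification. Note, however, that the paper does not prove this theorem at all: it is simply quoted from \cite[Theorem 2.19]{Aschieri_2016}, so there is no ``paper's own proof'' to compare against. What you have written is essentially the argument one finds in that reference (or that one would reconstruct from first principles), and all the steps you outline---colinearity of $\alpha_{V,W}$ via cancellation of $\gamma^{-1}\ast\gamma$, mutual invertibility, naturality, the hexagon reducing to the $\gamma^{-1}$-cocycle identity, and the quasi-inverse coming from $(H_\gamma)_{\gamma^{-1}}\cong H$---are correct and complete.
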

	
	If $A$ is a right $H$-comodule algebra, we can deform the product of $A$ using th $2$-cocycle $\gamma$ by defining
	\beq
	\label{eq:deformed_prod_right_algebra}
	a\cdot_{\gamma}\tilde{a}:=a_{(0)}\tilde{a}_{(0)}\gamma^{-1}(a_{(1)}, \tilde{a}_{(1)}).
	\eeq  
	We denote by $A_{\gamma}$ the resulting algebra. It is straightforward, using the properties of $2$-cocycles, that the right coaction $\rho$ of $H_{\gamma}$ on $A_{\gamma}$ is still an algebra morphism with respect to the new product $\rho(a\cdot_{\gamma}\tilde{a})=\rho(a)\cdot_{\gamma}\rho(\tilde{a})$, $\forall a,\tilde{a}\in A_{\gamma}$ making $A_{\gamma}$ a right $H_{\gamma}$-comodule algebra. Thus, also for comodule algebras we a functor $\cA^H\longrightarrow \cA^{H_{\gamma}}$.

	For any left $H$-comodules a similar result holds. To distinguish the two cases we denote the $2$-cocycle by $\sigma:H\otimes H\longrightarrow\bK$. In this case, the functor realizing the equivalence between left $H$-comodules $(\tensor[^H]{\mathfrak{M}}{},\otimes)$ and left $H_{\sigma}$-comodules $(\tensor[^{H_{\sigma}}]{\mathfrak{M}}{},\tensor[^{\sigma}]{\otimes}{})$ is denoted by $\Sigma:V\longmapsto \tensor[_{\sigma}]{V}{}$. It is a monoidal functor with isomorphism given by
	\beq
	\label{eq:deformed_tensor_iso_left}
	\phi_{V,W}:\tensor[_{\sigma}]{V}{}\tensor[^{\sigma}]{\otimes}{}\tensor[_{\sigma}]{W}{}\longrightarrow\tensor[_{\sigma}]{(V\otimes W)}{},\quad v\tensor[^{\sigma}]{\otimes}{}w\longmapsto\sigma({v_{(-1)}, w_{(-1)}})v_{(0)}\otimes w_{(0)},
	\eeq
	which inverse is given by
	\beq
	\label{eq:deformed_tensor_iso_left_inv}
	\phi^{-1}_{V,W}:\tensor[_{\sigma}]{(V\otimes W)}{}\longrightarrow\tensor[_{\sigma}]{V}{}\tensor[^{\sigma}]{\otimes}{}\tensor[_{\sigma}]{W}{},\quad v\otimes w\longmapsto\sigma^{-1}({v_{(-1)}, w_{(-1)}})v_{(0)}\tensor[^{\sigma}]{\otimes}{} w_{(0)}.
	\eeq
	The product in a left $H$-comodule algebra is deformed in the following way
	\beq
	\label{eq:deform_prod_left_algebra}
	a\bullet_{\sigma}\tilde{a}:=\sigma(a_{(-1)}, \tilde{a}_{(-1)})a_{(0)}\tilde{a}_{(0)}.
	\eeq
	The resulting left $H_{\sigma}$-comodule algebra is denoted by $\tensor[_{\sigma}]{A}{}$.
	
	\subsection{External symmetries}
	Let $H$ and $K$ be two Hopf algebras. A $(K,H)$-bicomodule is a vector space $V$ endowed with a left $K$-coaction $\lambda_V$ and a right $H$-coaction $\rho_V$ such that they commute
	\beq
	\label{eq:bicomodule_eq}
	(\lambda_V\otimes\mathrm{id}_H)\circ\rho_V=(\mathrm{id}_K\otimes\rho_V)\circ\lambda_V.
	\eeq 
	In Sweedler notation we have $\rho_V(v)=v_{(0)}\otimes v_{(1)}$, $\lambda_V(v)=v_{(-1)}\otimes v_{(0)}$ then the above equation reads
	\begin{equation*}
		v_{(0)(-1)}\otimes v_{(0)(0)}\otimes v_{(1)}=v_{(-1)}\otimes v_{(0)(0)}\otimes v_{(0)(1)},\quad v\in V.
	\end{equation*}
	A $(K,H)$-bicomodule algebra is an algebra $A$ with commuting $(K,H)$-coactions as above that are also algebra morphism. In general one has that $A^{coH}$ is different from $\tensor[^{coK}]{A}{}$. When we consider a $H$-Hopf-Galois extension $B\subseteq A$ and a Hopf algebra $K$ such that $A$ is a $(K,H)$-comodule algebra, we refer to $K$ as an \textit{external symmetry} as in \cite{Aschieri_2016}.   
	
	\subsection{Deforming principal comodule algebras}
	We now characterize the deformations of Hopf-Galois extensions and principal comodule algebras. Regarding the former, we report the result of \cite{Aschieri_2016}, for the latter we give explicit formulas for a strong connection $\ell$ that were given implicitly in the reference.
	
	We first consider a right $H$-Hopf-Galois extension $B\subseteq A$ and a $2$-cocycle $\gamma$ of $H$ that deforms $H$ into $H_{\gamma}$ and $A$ into $A_{\gamma}$. The subalgebra $B$ is unchanged, in fact for any pair $b,b'\in B$ one has $b\cdot_{\gamma}b'=bb'$.
	
	\begin{thm}[]
		\label{thm:deformed_Hopf_alg}
		The algebra extension $B\subseteq A$ is $H$-Hopf-Galois if and only if $B\subseteq A_{\gamma}$ is $H_{\gamma}$-Hopf-Galois.
	\end{thm}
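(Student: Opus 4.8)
The plan is to prove only the implication ``if $B\subseteq A$ is $H$-Hopf-Galois then $B\subseteq A_\gamma$ is $H_\gamma$-Hopf-Galois'', the converse then following by applying this very implication to the comodule algebra $A_\gamma$ and to the $2$-cocycle of $H_\gamma$ that realises the quasi-inverse of the equivalence $\Gamma$ of Theorem \ref{thm:comodule_equiv_2-co} (the one whose associated deformations carry $H_\gamma$ back to $H$ and $A_\gamma$ back to $A$). The set-up is painless: since $\rho_A(b)=b\otimes 1_H$ for $b\in B$, one has $b\cdot_\gamma a=ba$ and $a\cdot_\gamma b=ab$ for all $a\in A$, so $B$ is again a subalgebra of $A_\gamma$, the balanced tensor products $A\otimes_B A$ and $A_\gamma\otimes_B A_\gamma$ coincide as vector spaces, and $B=A^{coH}=(A_\gamma)^{coH_\gamma}$ because the coaction is unchanged.

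Assuming $B\subseteq A$ is $H$-Hopf-Galois, I would work with the translation map $\tau=\can^{-1}|_H$, written $\tau(h)=h^{\1}\otimes_B h^{\2}$, which satisfies $h^{\1}h^{\2}=\epsilon(h)1_A$ together with the $\otimes_B$-analogues of \eqref{eq:strong_conn_Sw1} and \eqref{eq:strong_conn_Sw2}. The key step is to evaluate the deformed canonical map on the image of $\tau$: expanding $\cdot_\gamma$ through \eqref{eq:deformed_prod_right_algebra}, using coassociativity of $\rho_A$ and then the two translation-map identities, a careful bookkeeping of Sweedler legs should yield
\begin{equation*}
	\can_\gamma\bigl(h^{\1}\otimes_B h^{\2}\bigr)=(h_{(2)})^{\1}(h_{(2)})^{\2}\,\gamma^{-1}\bigl(S(h_{(1)}),h_{(3)}\bigr)\otimes h_{(4)}=w(h_{(1)})\,1_A\otimes h_{(2)},
\end{equation*}
where $w\colon H\to\bK$ is the map $w(h):=\gamma^{-1}(S(h_{(1)}),h_{(2)})$. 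This computation --- following the iterated coaction through the twisted product while using the cocycle conditions \eqref{eq:cocycle_1}--\eqref{eq:cocycle_2} --- is, I expect, the only delicate point. One also checks that $w$ is convolution invertible, exactly as for the maps $u_\gamma,v_\gamma$ in Proposition \ref{prop:deformation_Hopf_alg} (in particular $w(1_H)=1$).

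With this in hand I would define $\tau_\gamma\colon H\to A_\gamma\otimes_B A_\gamma$ by $\tau_\gamma(h):=w^{-1}(h_{(1)})\,(h_{(2)})^{\1}\otimes_B(h_{(2)})^{\2}$, where $w^{-1}$ is the convolution inverse of $w$. Then the displayed formula gives $\can_\gamma(\tau_\gamma(h))=w^{-1}(h_{(1)})w(h_{(2)})\,1_A\otimes h_{(3)}=1_A\otimes h$; since $\can_\gamma$ is left $A_\gamma$-linear, this shows $\can_\gamma$ is surjective and that $a\otimes h\mapsto a\cdot_\gamma\tau_\gamma(h)$ is a right inverse. As the antipode $S_\gamma$ of $H_\gamma$ is bijective (Proposition \ref{prop:deformation_Hopf_alg}), surjectivity of the canonical map of a comodule algebra forces its bijectivity (cf.\ \cite{Schneider1990RepresentationTO,Brzezi_ski_1996}), so $\can_\gamma$ is bijective and $B\subseteq A_\gamma$ is $H_\gamma$-Hopf-Galois; alternatively, one shows directly that $a\otimes h\mapsto a\cdot_\gamma\tau_\gamma(h)$ is a two-sided inverse, which amounts to the identity $\tilde a_{(0)}\cdot_\gamma\tau_\gamma(\tilde a_{(1)})=1_A\otimes_B\tilde a$ in $A_\gamma\otimes_B A_\gamma$ and is handled by a computation of the same type (using \eqref{eq:strong_conn_Sw2}). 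Finally, applying the implication just proved to $(A_\gamma,H_\gamma)$ with the quasi-inverse cocycle yields the reverse implication, which completes the argument.
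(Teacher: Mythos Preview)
Your argument is essentially correct but follows a route different from the paper's. The paper (following \cite{Aschieri_2016}) does not construct an inverse of $\can_\gamma$ by hand; instead it places $\can_\gamma$ and $\Gamma(\can)$ in a commutative square whose vertical arrows are the $H_\gamma$-comodule isomorphisms $\alpha_{A,A}$ of Theorem~\ref{thm:comodule_equiv_2-co} (together with the isomorphism $\ff:\underline{H_\gamma}\to\underline{H}_\gamma$), so that bijectivity of $\can_\gamma$ is equivalent to bijectivity of $\Gamma(\can)$, hence to that of $\can$. This gives the ``if and only if'' in one stroke, with no explicit Sweedler computations. Your approach, by contrast, produces the deformed translation map directly: since $w(h)=\gamma^{-1}(S(h_{(1)}),h_{(2)})=u_\gamma^{-1}(h)$, your $\tau_\gamma(h)=u_\gamma(h_{(1)})\,(h_{(2)})^{\1}\otimes_B(h_{(2)})^{\2}$ is exactly the $A\otimes_BA$-level shadow of the strong connection $\ell_\gamma$ in \eqref{eq:strong_conn_def_right}. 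In effect you are anticipating the proof of Theorem~\ref{thm:deformed_strong_conn_right} and specialising it to the translation map; what your method buys is explicitness, what the paper's buys is that the converse and the two-sided inverse come for free from the square.

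One point does need tightening. The assertion that surjectivity of the canonical map, together with bijectivity of the antipode, already forces bijectivity is not a general theorem in \cite{Schneider1990RepresentationTO,Brzezi_ski_1996}; those references give properties of the translation map once $\can$ is known to be bijective, not a surjective-implies-bijective principle. You should therefore carry out the ``alternative'' you mention: verify directly that $a\otimes h\mapsto a\cdot_\gamma\tau_\gamma(h)$ is also a left inverse, i.e.\ that $\tilde a_{(0)}\cdot_\gamma\tau_\gamma(\tilde a_{(1)})=1_A\otimes_B\tilde a$. This is the same kind of cocycle bookkeeping as your main computation (use \eqref{eq:strong_conn_Sw2} for the translation map, then collapse $u_\gamma*u_\gamma^{-1}$), and once done the argument is complete. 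The reduction of the converse to the forward direction via the inverse cocycle is fine.
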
 
	This result \cite[Theorem 3.6]{Aschieri_2016} is a consequence of the commutativity of the following diagram
	\beq
	\label{eq:diagrm_Hopf-Galois}
	\begin{tikzcd}
		A_{\gamma}\otimes_BA_{\gamma} \arrow[rr, "\mathrm{can}_{\gamma}"] \arrow[dd, "\simeq"'] & & A_{\gamma}\otimes \underline{H_{\gamma}} \arrow[dd, "\simeq"]\\
		& & \\
		(A\otimes_BA)_{\gamma} \arrow[rr, "\Gamma(\mathrm{can})"'] & & (A\otimes \underline{H})_{\gamma},
	\end{tikzcd}
	\eeq
	where $\underline{H}$ is $H$ endowed with the right adjoint coaction of the Example \ref{expl:adjoint_coaction}, the left vertical arrow is the isomorphism of \ref{thm:comodule_equiv_2-co} for $A\otimes_B A$ while the right one is always given by \ref{thm:comodule_equiv_2-co} composed with the isomorphism of Theorem 3.4 in \cite{Aschieri_2016}. Here $H$ and $H_{\gamma}$ are respectively right $H$ and $H_{\gamma}$-comodule endowed with the adjoint coaction \eqref{eq:adjoint_coaction}. The bijectivity of $\can_{\gamma}$ is tantamount to the bijectivity of $\Gamma(\can)$. 
	
	The same result holds for principal comodule algebras, here we prove it by showing the explicit formula for a strong connection
	\begin{thm}
		\label{thm:deformed_strong_conn_right}
		The algebra extension $B\subseteq A$ is a principal $H$-comodule algebra if and only if $B\subseteq A_{\gamma}$ is a principal $H_{\gamma}$-comodule algebra. 
	\end{thm}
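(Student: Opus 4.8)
The plan is to prove the statement by exhibiting an explicit strong connection $\ell_\gamma:H_\gamma\longrightarrow A_\gamma\otimes A_\gamma$ on the deformed extension, built out of a given strong connection $\ell:H\longrightarrow A\otimes A$ on $B\subseteq A$. Since by Theorem~\ref{thm:deformed_Hopf_alg} the extension $B\subseteq A_\gamma$ is already known to be $H_\gamma$-Hopf-Galois, and since by \cite{schauenburg2005generalized,dkabrowski2001strong} a Hopf-Galois extension is principal if and only if it admits a strong connection, producing such an $\ell_\gamma$ establishes one implication, and the converse follows by applying the same construction to the inverse cocycle $\gamma^{-1}$ (which deforms $H_\gamma$ back to $H$ and $A_\gamma$ back to $A$, up to the canonical identifications).

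First I would write down the candidate. The natural guess, guided by the isomorphisms $\alpha_{V,W}$ and $\alpha^{-1}_{V,W}$ of Theorem~\ref{thm:comodule_equiv_2-co} and the diagram~\eqref{eq:diagrm_Hopf-Galois}, is to conjugate $\ell$ by the cocycle data. Concretely, using $\ell(h)=h^{\1}\otimes h^{\2}$, I expect a formula of the shape
\begin{equation*}
\ell_\gamma(h)=\gamma(S^{-1}(\tensor{h}{^{\2}_{(1)}})\,,\,h^{\1}{}_{(1)})\;h^{\1}{}_{(0)}\otimes \tensor{h}{^{\2}_{(0)}}\;\gamma^{-1}(h^{\1}{}_{(2)}\,,\,\tensor{h}{^{\2}_{(2)}}),
\end{equation*}
possibly with a further twist by $u_\gamma$ or $v_\gamma$ to match the deformed antipode $S_\gamma$ of Proposition~\ref{prop:deformation_Hopf_alg}; the precise placement of the cocycle arguments will be dictated by forcing the three defining identities \eqref{eq:strong_conn1}--\eqref{eq:strong_conn3} to hold. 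The cleft case \eqref{eq:strong_conn_cleft} is a useful consistency check: if $\phi$ is a cleaving map for $A$, then $\phi$ (suitably renormalized) should be a cleaving map for $A_\gamma$, and the above formula should reduce to $\ell_\gamma(h)=\phi^{-1}_\gamma(h_{(1)})\otimes\phi_\gamma(h_{(2)})$ in that case.

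The key steps, in order, are: (1) verify that $\ell_\gamma$ is right $H_\gamma$-colinear, i.e. \eqref{eq:strong_conn_Sw1} with $\rho$, $\Delta$ unchanged but the second tensor leg now living in $\underline{H_\gamma}$ — here I would use that the coalgebra structure is untouched and push the cocycle factors through the coproduct using coassociativity and the cocycle condition \eqref{eq:cocycle_1}; (2) verify left $H_\gamma$-colinearity \eqref{eq:strong_conn_Sw2}, which is the mirror computation and will bring in $S^{-1}_\gamma$ and hence the functions $v_\gamma$; (3) check that $\pi_B\circ\ell_\gamma$ equals the translation map $\tau_\gamma$ of $B\subseteq A_\gamma$, equivalently \eqref{eq:strong_conn_Sw3}, i.e. $\can_\gamma(\pi_B(\ell_\gamma(h)))=1_{A_\gamma}\otimes h$ — this is where the commuting square \eqref{eq:diagrm_Hopf-Galois} does the real work, since $\can_\gamma$ is identified with $\Gamma(\can)$ conjugated by the vertical isomorphisms, and $\pi_B\circ\ell=\tau=\can^{-1}|_H$ by hypothesis. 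I expect step (3) to be the main obstacle: one must keep careful track of how $\pi_B$ interacts with the $\gamma$-deformed product (recall $b\cdot_\gamma b'=bb'$ on $B$, but the balanced tensor product $A_\gamma\otimes_B A_\gamma$ is taken over the deformed bimodule structure), and reconcile the cocycle factors coming from $\ell_\gamma$ with those hidden in the isomorphism $A_\gamma\otimes_B A_\gamma\xrightarrow{\simeq}(A\otimes_B A)_\gamma$. A clean way to organize this is to prove directly that $\pi_B\circ\ell_\gamma=\alpha^{-1}_{A,A}|_H\circ\Gamma(\tau)$ as maps $H_\gamma\to A_\gamma\otimes_B A_\gamma$ and then invoke \eqref{eq:diagrm_Hopf-Galois} to conclude $\can_\gamma\circ\pi_B\circ\ell_\gamma=\mathrm{id}\otimes\mathrm{incl}$, rather than expanding $\can_\gamma$ by hand.

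Finally, once $\ell_\gamma$ is verified to satisfy \eqref{eq:strong_conn1}--\eqref{eq:strong_conn3}, faithful flatness of $A_\gamma$ over $B$ is automatic: $A_\gamma\cong A$ as a left $B$-module (the left $B$-action is undeformed since $b\cdot_\gamma a=ba$ for $b\in B$, as one checks from \eqref{eq:deformed_prod_right_algebra} using $\rho(b)=b\otimes 1_H$ and \eqref{eq:cocycle_2}), so faithful flatness transfers verbatim. This gives that $B\subseteq A_\gamma$ is a principal $H_\gamma$-comodule algebra, and symmetry in $\gamma\leftrightarrow\gamma^{-1}$ gives the equivalence.
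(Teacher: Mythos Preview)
Your overall strategy --- exhibit an explicit $\ell_\gamma$ and verify \eqref{eq:strong_conn1}--\eqref{eq:strong_conn3} --- is exactly the paper's. The gap is in the candidate itself. Your guess applies $\alpha^{-1}_{A,A}$ to $\ell$ and then hedges with ``possibly a further twist by $u_\gamma$ or $v_\gamma$''; that further twist is not optional, and it is not ad hoc. The point you are missing is that $\Gamma(\ell):\underline{H}_\gamma\to(A\otimes A)_\gamma$ has the \emph{wrong} source: it is colinear for the deformed adjoint comodule $\underline{H}_\gamma$, whereas a strong connection on $B\subseteq A_\gamma$ must start from $\underline{H_\gamma}$, i.e.\ from $H_\gamma$ with its own adjoint coaction. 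These two right $H_\gamma$-comodules are isomorphic via the explicit map $\ff:\underline{H_\gamma}\to\underline{H}_\gamma$, $h\mapsto h_{(3)}\,u_\gamma(h_{(1)})\,\gamma^{-1}(S(h_{(2)}),h_{(4)})$ (this is \cite[Theorem~3.4]{Aschieri_2016}), and the paper defines $\ell_\gamma:=\alpha^{-1}_{A,A}\circ\Gamma(\ell)\circ\ff$. The formula that drops out is much simpler than yours:
\[
\ell_\gamma(h)=\tensor{h}{_{(2)}^{\1}}\otimes^\gamma\tensor{h}{_{(2)}^{\2}}\,u_\gamma(h_{(1)}),
\]
and with it the three checks are short direct computations (the paper does not route step~(3) through the diagram~\eqref{eq:diagrm_Hopf-Galois}; it just expands $\can_\gamma\circ\pi_B\circ\ell_\gamma$ and watches $u_\gamma*u_\gamma^{-1}$ collapse). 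Your formula as written is not this one, and without the $\ff$-correction the right-colinearity check \eqref{eq:strong_conn_Sw1} will not close up.

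Two smaller remarks. First, your converse via $\gamma^{-1}$ is fine, but the paper gets it for free: since $\ff$ and $\alpha_{A,A}$ are isomorphisms, $\ell_\gamma$ is a strong connection iff $\ell$ is. Second, your faithful-flatness paragraph is correct but redundant in this framework: the paper uses the characterization ``principal $\Leftrightarrow$ admits a strong connection'' throughout, so exhibiting $\ell_\gamma$ already finishes the job.
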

	\begin{proof}
		To write down the right diagram, we need to introduce the $H_{\gamma}$-comodule isomorphism \cite[Theorem 3.4]{Aschieri_2016}
		\begin{equation*}
			\ff:\underline{H_{\gamma}}\longrightarrow\underline{H}_{\gamma},\quad h\longmapsto h_{(3)}u_{\gamma}(h_{(1)})\gamma^{-1}(S(h_{(2)}),h_{(4)})
		\end{equation*}
		whose inverse is given by
		\begin{equation*}
			\ff^{-1}:\underline{H}_{\gamma}\longrightarrow\underline{H_{\gamma}},\quad h\longmapsto h_{(3)}u^{-1}_{\gamma}(h_{(2)})\gamma(S(h_{(1)}),h_{(4)}).
		\end{equation*}
		We recall that $\underline{H}$ indicates $H$ endowed with the right adjoint coaction \eqref{eq:adjoint_coaction}. At this point, defines $\ell_\gamma$ the map making the following diagram commute
		\begin{equation*}
			\begin{tikzcd}
				\underline{H}_{\gamma} \arrow[rr, "\Gamma(\ell)"] \arrow[dd, "\ff^{-1}"'] & & (A\otimes A)_{\gamma} \arrow[dd, "\alpha^{-1}_{A,A}"]\\
				& &\\
				\underline{H_{\gamma}} \arrow[rr, "\ell_{\gamma}"] & & A_{\gamma}\otimes^{\gamma}A_{\gamma}
			\end{tikzcd}
		\end{equation*}
		Explicitly, we have
		\begin{equation*}
			\ell_{\gamma}:=\alpha^{-1}_{A,A}\circ\Gamma(l)\circ\ff:\underline{H_{\gamma}}\longrightarrow A_{\gamma}\otimes^{\gamma}A_{\gamma}
		\end{equation*}
		mapping $h\longmapsto \tensor[]{h}{_{(2)}^{\1}}\otimes^{\gamma}\tensor[]{h}{_{(2)}^{\2}}u_{\gamma}(h_{(1)})$. We now prove that it is a strong connection for the extension $B\subseteq A_{\gamma}$. For this, we need to check the defining equations for any $h\in H_{\gamma}$
		\begin{equation*}
			\begin{split}
				[(\mathrm{id}_A\otimes\rho^{\gamma})\circ \ell_{\gamma}](h)&=\tensor{h}{_{(2)}^{\1}}\otimes^{\gamma}\tensor{h}{_{(2)}^{\2}_{(0)}}\otimes\tensor{h}{_{(2)}^{\2}_{(1)}}u_{\gamma}(h_{(1)})\\
				&=\tensor{h}{_{(2)}^{\1}}\otimes^{\gamma}\tensor{h}{_{(2)}^{\2}}\otimes h_{(3)}u_{\gamma}(h_{(1)})\\
				&=[(\mathrm{id}_H\otimes \ell_{\gamma})\circ\Delta](h)
			\end{split}
		\end{equation*}
		
		\begin{equation*}
			\begin{split}
				[(\lambda^{\gamma}\otimes\mathrm{id}_A)\circ \ell_{\gamma}](h)&=S_{\gamma}^{-1}(\tensor{h}{_{(2)}^{\1}_{(1)}})\otimes\tensor{h}{_{(2)}^{\1}_{(0)}}\otimes\tensor{h}{_{(2)}^{\2}}u_{\gamma}(h_{(1)})\\
				&=S^{-1}_{\gamma}(S(h_{(2)}))\otimes\tensor{h}{_{(3)}^{\1}}\otimes^{\gamma}\tensor{h}{_{(3)}^{\1}}u_{\gamma}(h_{(1)})\\
				&=S^{-1}_{\gamma}(S_{\gamma}(h_{(3)})\otimes\tensor{h}{_{(5)}^{\1}}\otimes^{\gamma}\tensor{h}{_{(5)}^{\2}}u_{\gamma}(h_{(1)})u_{\gamma}^{-1}(h_{(2)})u_{\gamma}(h_{(4)})\\
				&=h_{(1)}\otimes\tensor{h}{_{(3)}^{\1}}\otimes^{\gamma}\tensor{h}{_{(3)}^{\2}}u_{\gamma}(h_{(2)})=\left[(\mathrm{id}_H\otimes \ell_{\gamma})\circ\Delta\right](h) 
			\end{split}
		\end{equation*}
		
		\begin{equation*}
			\begin{split}
				(\can_{\gamma}\circ\pi_B\circ\ell_{\gamma})(h)&=\tensor{h}{_{(2)}^{\1}}\cdot_{\gamma}\tensor{h}{_{(2)}^{\2}_{(0)}}\otimes\tensor{h}{_{(2)}^{\2}_{(1)}}u_{\gamma}(h_{(1)})\\
				&=\tensor{h}{_{(2)}^{\1}}\cdot\tensor{h}{_{(2)}^{\2}}\otimes h_{(3)}u_{\gamma}(h_{(1)})\\
				&=1_A\otimes h_{(5)}\gamma(h_{(1)},S(h_{(2)}))\gamma^{-1}(S(h_{(3)}),h_{(4)})\\
				&=1_A\otimes h_{(2)}(u_{\gamma}*u^{-1}_{\gamma})(h_{(1)})=1_A\otimes h
			\end{split}
		\end{equation*}
		We used \eqref{eq:strong_conn_Sw1}, \eqref{eq:strong_conn_Sw2}, the definition of product $\cdot_{\gamma}$ \eqref{eq:deformed_prod_right_algebra}, and \eqref{eq:strong_conn_Sw3}. Being $\ff$ and $\alpha_{A,A}$ isomorphism, the inverse implication is also true.
	\end{proof}
	Thus for the deformed principal $H_{\gamma}$-comodule algebra $B\subseteq A_{\gamma}$ a strong connection is given by the formula
	\beq
	\label{eq:strong_conn_def_right}
	\ell_{\gamma}(h)=\tensor{h}{_{(2)}^{\1}}\otimes^{\gamma}\tensor{h}{_{(2)}^{\2}}u_{\gamma}(h_{(1)}),\quad h\in H_{\gamma}.
	\eeq
	
	If $B\subseteq A$ is an $H$-cleft extension with cleaving map $\phi$, then the deformed extension is cleft, and in this case too, the opposite implication is true. For a proof of this fact, we refer to \cite[Corollary 3.7]{Aschieri_2016} where the categorical setting of deformations is considered, and also to \cite[Theorem 5.3]{montgomery2005krull} where an explicit formula for the deformed inverse cleaving map is given, namely
	\begin{equation}
		\label{eq:cleaving_inv_deformed}
		\phi_{\gamma}^{-1}(h)=\phi^{-1}(h_{(2)})u_{\gamma}(h_{(1)}),\quad h\in H.
	\end{equation}
	Thus, using the identy \eqref{eq:strong_conn_cleft}, we have
	\begin{equation}
		\label{eq:strong_conn_cleft_deformed_right}
		\ell_{\gamma}(h)=\phi^{-1}(h_{(2)})\otimes \phi(h_{(3)})u_{\gamma}(h_{(1)}),\quad h\in H,
	\end{equation}
	which agrees with the result of \ref{thm:deformed_strong_conn_right}.

	Now let $A$ be a $(K,H)$-bicomodule algebra and put $B:=A^{coH}$. If a $2$-cocycle $\sigma$ of $K$ is given, we denote by $\Sigma:\tensor[^K]{\mathfrak{M}}{}\longrightarrow\tensor[^{K_{\sigma}}]{\mathfrak{M}}{}$ the corresponding monoidal functor, so $\Sigma(V)=\tensor[_{\sigma}]{V}{}$ for any left $K$-comodule. The Hopf algebra $H$ is a left $K$-comodule algebra with the trivial coaction $h\longmapsto 1_K\otimes h$, so one has $\tensor[_{\sigma}]{H}{}\simeq H$. 
	
	\begin{thm}[]
		\label{thm:deformed_ext_symm}
		The algebra extension $B\subseteq A$ is $H$-Hopf-Galois if and only if $\tensor[_{\sigma}]{B}{}\subseteq\tensor[_{\sigma}]{A}{}$ is $H$-Hopf-Galois.
	\end{thm}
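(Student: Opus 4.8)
The plan is to mimic the proof of Theorem \ref{thm:deformed_Hopf_alg}: I would exhibit a commutative square, analogous to \eqref{eq:diagrm_Hopf-Galois}, whose vertical arrows are isomorphisms and whose bottom arrow is the canonical map transported by the monoidal equivalence $\Sigma\colon\tensor[^K]{\fM}{}\longrightarrow\tensor[^{K_{\sigma}}]{\fM}{}$ (the left-comodule counterpart of Theorem \ref{thm:comodule_equiv_2-co}), so that bijectivity of $\can_{\sigma}$ is forced to be equivalent to bijectivity of $\can$.

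First I would record that the deformed extension is well posed. Because the left $K$-coaction and the right $H$-coaction on $A$ commute \eqref{eq:bicomodule_eq}, the $K$-coaction restricts to $B=A^{coH}$ and one sees $b_{(0)}\in B$ for every $b\in B$; hence $B$ is a left $K$-subcomodule algebra and $\tensor[_{\sigma}]{B}{}\subseteq\tensor[_{\sigma}]{A}{}$ is a subalgebra. The right $H$-coaction stays an algebra morphism for $\bullet_{\sigma}$ (the cocycle $\sigma$ only involves $K$-legs, which commute past the $H$-legs), and $(\tensor[_{\sigma}]{A}{})^{coH}=\tensor[_{\sigma}]{B}{}$, so the deformed extension is $\tensor[_{\sigma}]{B}{}\subseteq\tensor[_{\sigma}]{A}{}$ with $\can_{\sigma}(a\otimes_{\tensor[_{\sigma}]{B}{}}\tilde a)=a\bullet_{\sigma}\tilde a_{(0)}\otimes\tilde a_{(1)}$. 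Next, the canonical map $\can\colon A\otimes_B A\longrightarrow A\otimes H$ is a morphism of left $K$-comodules, where $A\otimes_B A$ carries the (well-defined) diagonal $K$-coaction and $A\otimes H$ carries the $K$-coaction on its first leg, $H$ being equipped with the trivial $K$-coaction; this again uses only \eqref{eq:bicomodule_eq}. Applying $\Sigma$, which is an equivalence, yields a morphism $\Sigma(\can)$ in $\tensor[^{K_{\sigma}}]{\fM}{}$ that is bijective if and only if $\can$ is.

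It then remains to identify the source and target of $\Sigma(\can)$. Since $\Sigma$ is monoidal it sends the algebra $B$ to $\tensor[_{\sigma}]{B}{}$ and the $B$-bimodule $A$ to the $\tensor[_{\sigma}]{B}{}$-bimodule $\tensor[_{\sigma}]{A}{}$ (this is exactly \eqref{eq:deform_prod_left_algebra}), and, being an equivalence, it is exact, so the monoidal constraint $\phi_{A,A}$ of \eqref{eq:deformed_tensor_iso_left} intertwines the coequalizer diagram computing $A\otimes_B A$ with the one computing the deformed balanced tensor product $\tensor[_{\sigma}]{A}{}\otimes_{\tensor[_{\sigma}]{B}{}}\tensor[_{\sigma}]{A}{}$, and descends to an isomorphism $\tensor[_{\sigma}]{A}{}\otimes_{\tensor[_{\sigma}]{B}{}}\tensor[_{\sigma}]{A}{}\xrightarrow{\ \simeq\ }\Sigma(A\otimes_B A)$. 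On the other side, $H$ carries the trivial $K$-coaction, so $\tensor[_{\sigma}]{H}{}\simeq H$ and $\phi_{A,H}$ specializes, by \eqref{eq:cocycle_2}, to the identity of $A\otimes H$, giving $\tensor[_{\sigma}]{A}{}\otimes H\xrightarrow{\ \simeq\ }\Sigma(A\otimes H)$. With these arrows the square
\begin{equation*}
\begin{tikzcd}
\tensor[_{\sigma}]{A}{}\otimes_{\tensor[_{\sigma}]{B}{}}\tensor[_{\sigma}]{A}{} \arrow[rr, "\can_{\sigma}"] \arrow[dd, "\simeq"'] & & \tensor[_{\sigma}]{A}{}\otimes H \arrow[dd, "\simeq"]\\
& & \\
\Sigma(A\otimes_B A) \arrow[rr, "\Sigma(\can)"'] & & \Sigma(A\otimes H)
\end{tikzcd}
\end{equation*}
commutes, whence $\can_{\sigma}$ is bijective iff $\Sigma(\can)$ is iff $\can$ is; the reverse implication also follows directly by running the same argument with the inverse cocycle $\sigma^{-1}$, equivalently with a quasi-inverse of $\Sigma$.

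I expect the main obstacle to be the verification that this last square commutes. One evaluates both composites on $a\otimes_{\tensor[_{\sigma}]{B}{}}\tilde a$, expands $\can_{\sigma}$, $\phi_{A,A}$ and $\phi_{A,H}$, and must then carry simultaneously the two commuting coactions and the factors of $\sigma,\sigma^{-1}$ introduced by $\bullet_{\sigma}$ and by $\phi_{A,A}$, using \eqref{eq:bicomodule_eq} to move every $K$-leg onto the left tensor factor and \eqref{eq:cocycle_1}--\eqref{eq:cocycle_2} to reassociate, until everything collapses to $\Sigma(\can)$ conjugated by the two isomorphisms. A secondary, more conceptual point to treat with care is the descent of $\phi_{A,A}$ to the balanced tensor products, i.e. that the monoidal equivalence $\Sigma$ transports the algebra and bimodule data of the pair $(B,A)$ to precisely $(\tensor[_{\sigma}]{B}{},\tensor[_{\sigma}]{A}{})$ and preserves the relevant colimits; note that, since only the Hopf--Galois property is asserted and not principality, no faithful flatness argument enters here.
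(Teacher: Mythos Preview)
Your proposal is correct and follows exactly the paper's approach: the paper simply cites \cite[Theorem 3.15 and Corollary 3.16]{Aschieri_2016} and displays the same commutative square you wrote (with $\Sigma(-)$ written as $\tensor[_{\sigma}]{(-)}{}$ and vertical arrows given by the monoidal constraints \eqref{eq:deformed_tensor_iso_left}). Your added details---that $\can$ is left $K$-colinear via \eqref{eq:bicomodule_eq}, that $\phi_{A,H}$ degenerates to the identity by \eqref{eq:cocycle_2}, and that $\phi_{A,A}$ descends to the balanced tensor products---are precisely the verifications one has to make and they go through as you describe.
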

	As for the case where the Hopf algebra $H$ is deformed, the proof of this result \cite[Theorem 3.15 and Corollary 3.16]{Aschieri_2016} follows from the commutativity of a diagram, namely
	\beq
	\begin{tikzcd}
		\tensor[_{\sigma}]{A}{}\tensor[^{\sigma}]{\otimes}{_{\tensor[_{\sigma}]{B}{}}}\tensor[_{\sigma}]{A}{} \arrow[rr, "\mathrm{can}_{\sigma}"] \arrow[dd, "\simeq"'] & & \tensor[_{\sigma}]{A}{}\tensor[^{\sigma}]{\otimes}{}H \arrow[dd, "\simeq"]\\
		& & \\
		\tensor[_{\sigma}]{(A\otimes_BA)}{} \arrow[rr, "\Sigma(\mathrm{can})"] & & \tensor[_{\sigma}]{(A\otimes H)}{}, 
	\end{tikzcd}
	\eeq
	where the vertical arrows are the corresponding isomorphism \eqref{eq:deformed_tensor_iso_left}. 
	
	Again the same result holds for principal comodule algebras
	\begin{thm}
		\label{thm:strong_conn_left}
		The algebra extension $B\subseteq A$ with external symmetry $K$ is a principal $H$-comodule algebra if and only if $\tensor[_{\sigma}]{B}{}\subseteq\tensor[_{\sigma}]{A}{}$ is a principal $H$-comodule algebra with external symmetry $\tensor[_{\sigma}]{K}{ }$. 
	\end{thm}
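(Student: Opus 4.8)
The strategy is to repeat the argument in the proof of Theorem~\ref{thm:deformed_strong_conn_right}, but transporting a strong connection along the monoidal equivalence $\Sigma$ attached to the external symmetry rather than along $\Gamma$. First I would check that the statement is meaningful: the left $K$-coaction $\lambda$ on $A$ is not changed by the deformation, so on $\tensor[_{\sigma}]{A}{}$ it is still an algebra morphism for the product $\bullet_{\sigma}$ of \eqref{eq:deform_prod_left_algebra} — the same $2$-cocycle computation that proves this for the right-handed product $\cdot_{\gamma}$ — and it still commutes with the right $H$-coaction, which is not deformed at all, by \eqref{eq:bicomodule_eq}; hence $\tensor[_{\sigma}]{K}{}$ is an external symmetry of $\tensor[_{\sigma}]{A}{}$ and $(\tensor[_{\sigma}]{A}{})^{coH}=\tensor[_{\sigma}]{B}{}$ as algebras. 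By Theorem~\ref{thm:deformed_ext_symm} the extension $\tensor[_{\sigma}]{B}{}\subseteq\tensor[_{\sigma}]{A}{}$ is $H$-Hopf--Galois, so by \cite{schauenburg2005generalized,dkabrowski2001strong} it only remains to exhibit a strong connection for it.

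Starting from a strong connection $\ell(h)=h^{\1}\otimes h^{\2}$ of $B\subseteq A$, I would set $\ls:=\phi^{-1}_{A,A}\circ\ell$, with $\phi^{-1}_{A,A}$ the isomorphism \eqref{eq:deformed_tensor_iso_left_inv}, that is
\begin{equation}
	\label{eq:strong_conn_left}
	\ls(h):=\sigma^{-1}\!\big(\tensor{h}{^{\1}_{(-1)}},\tensor{h}{^{\2}_{(-1)}}\big)\,\tensor{h}{^{\1}_{(0)}}\tensor[^{\sigma}]{\otimes}{}\tensor{h}{^{\2}_{(0)}},\qquad h\in H,
\end{equation}
where $(-1),(0)$ denote the left $K$-coaction; conceptually this is $\phi^{-1}_{A,A}\circ\Sigma(\ell)$, but the formula is well defined for any $\ell$ irrespective of $K$-colinearity. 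One then checks the three defining identities of a strong connection for $\ls$ on $\tensor[_{\sigma}]{B}{}\subseteq\tensor[_{\sigma}]{A}{}$. For \eqref{eq:strong_conn1}: the right $H$-coaction of $\tensor[_{\sigma}]{A}{}$ equals that of $A$, and applying $\id_{A}\otimes\rho$ to \eqref{eq:strong_conn_left} one uses \eqref{eq:bicomodule_eq} to move the scalar $\sigma^{-1}$ past the $H$-coaction, after which \eqref{eq:strong_conn_Sw1} for $\ell$ together with coassociativity of $\Delta$ closes the computation; \eqref{eq:strong_conn2} is entirely analogous, using the left $H$-coaction $v\mapsto S^{-1}(v_{(1)})\otimes v_{(0)}$, which is also unchanged, and \eqref{eq:strong_conn_Sw2}. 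For the normalisation \eqref{eq:strong_conn3}: composing $\pi_{\tensor[_{\sigma}]{B}{}}\circ\ls$ with $\can_{\sigma}$ and expanding $\bullet_{\sigma}$, one pulls the right $H$-coaction out onto $h^{\2}$ via \eqref{eq:bicomodule_eq}, uses coassociativity of $\lambda$ together with the fact that $\lambda$ is an algebra morphism, and finally the identity $\sigma^{-1}*\sigma=\epsilon^{\otimes}$ in $\Hom(K\otimes K,\bK)$ to cancel all $\sigma$-factors, leaving $h^{\1}\,\tensor{h}{^{\2}_{(0)}}\otimes\tensor{h}{^{\2}_{(1)}}=1_{A}\otimes h$ by \eqref{eq:strong_conn_Sw3}. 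Since $\phi_{A,A}$ is an isomorphism and the deformation by $\sigma$ is inverted by deforming $\tensor[_{\sigma}]{A}{}$ with the corresponding cocycle of $\tensor[_{\sigma}]{K}{}$ (see \cite{Aschieri_2016}), running the same construction backwards yields the converse implication.

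The step I expect to be the main obstacle is the normalisation axiom \eqref{eq:strong_conn3}. Two points need care there. First, one must know that $\phi^{-1}_{A,A}$ descends to the $\tensor[_{\sigma}]{B}{}$-balanced tensor product, so that $\pi_{\tensor[_{\sigma}]{B}{}}(\ls(h))$ genuinely represents an element of $\tensor[_{\sigma}]{A}{}\tensor[^{\sigma}]{\otimes}{_{\tensor[_{\sigma}]{B}{}}}\tensor[_{\sigma}]{A}{}$ on which $\can_{\sigma}$ acts — this is precisely the left vertical isomorphism in the square used to prove Theorem~\ref{thm:deformed_ext_symm}. Second, one must keep track of the two nested $K$-coactions carried by $h^{\1}$ and $h^{\2}$: expanding $\bullet_{\sigma}$ produces a product $\sigma^{-1}(\cdots)\,\sigma(\cdots)$ whose arguments only become comparable after invoking \eqref{eq:bicomodule_eq}, coassociativity of $\lambda$ and the counit axiom, and the cancellation works only once that product is recognised as $\sigma^{-1}*\sigma$ evaluated on the tensor-product coalgebra $K\otimes K$. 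By contrast, the verification of \eqref{eq:strong_conn1}--\eqref{eq:strong_conn2} and the compatibility checks for $\lambda$ on $\tensor[_{\sigma}]{A}{}$ are routine $2$-cocycle manipulations of the same flavour as those already displayed for Theorem~\ref{thm:deformed_strong_conn_right}.
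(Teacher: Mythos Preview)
Your proposal is correct and follows essentially the same route as the paper: define $\ls:=\phi^{-1}_{A,A}\circ\Sigma(\ell)$ by the formula \eqref{eq:strong_conn_left} and verify the three strong connection axioms directly, using the bicomodule compatibility \eqref{eq:bicomodule_eq} and the convolution identity $\sigma*\sigma^{-1}=\epsilon^{\otimes}$ for the normalisation, with the converse coming from invertibility of $\phi_{A,A}$. Your preliminary invocation of Theorem~\ref{thm:deformed_ext_symm} and the cited equivalence is harmless but redundant, since exhibiting a strong connection already establishes principality; and your worry about $\phi^{-1}_{A,A}$ descending to the balanced tensor product is not needed for the normalisation check, which the paper (and you) carry out simply by computing $\can_{\sigma}\circ\pi_{\tensor[_{\sigma}]{B}{}}\circ\ls$ on elements.
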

	\begin{proof}
		Recall that the inverse isomorphism of left the $K$-comodule \eqref{eq:deformed_tensor_iso_left} $A\otimes A$ is given by
		\begin{equation*}
			\phi^{-1}_{A,A}(a\otimes \tilde{a})=\sigma^{-1}\left(a_{(-1)}, \tilde{a}_{(-1)}\right)a_{(0)}\tensor[^{\sigma}]{\otimes}{} \tilde{a}_{(0)}.
		\end{equation*}
		We define the map $\tensor[_{\sigma}]{\ell}{}:=\phi^{-1}_{A,A}\circ\Sigma(l):H\longrightarrow\tensor[_{\sigma}]{A}{}\tensor[^{\sigma}]{\otimes}{}\tensor[_{\sigma}]{A}{}$ sending
		\begin{equation*}
			h\longmapsto \sigma^{-1}\left(\tensor{h}{^{\1}_{(-1)}},\tensor{h}{^{\2}_{(-1)}}\right)\tensor{h}{^{\1}_{(0)}}\tensor[^{\sigma}]{\otimes}{}\tensor{h}{^{\2}_{(0)}},
		\end{equation*}
		which makes the following diagram commute
		\begin{equation*}
			\begin{tikzcd}
				H \arrow[rr, "_{\sigma}\ell"] \arrow[rrdd, "\Sigma(\ell)"'] & & \tensor[_{\sigma}]{A}{}\tensor[^{\sigma}]{\otimes}{}\tensor[_{\sigma}]{A}{} \arrow[dd, "\phi_{A,A}"]\\
				& &\\
				&	& \tensor[_{\sigma}]{(A\otimes A)}{}
			\end{tikzcd}
		\end{equation*}
		Assume that $l$ is a strong connection of $B\subseteq A$, then for $\tensor[_{\sigma}]{\ell}{}$ we find that for any $h\in H$
		\begin{equation*}
			\begin{split}
				\left[(\mathrm{id}_{\tensor[_{\sigma}]{A}{}}\otimes\rho)\circ\tensor[_{\sigma}]{\ell}{}\right](h)&=\sigma^{-1}\left(\tensor{h}{^{\1}_{(-1)}},\tensor{h}{^{\2}_{(-1)}}\right)\tensor{h}{^{\1}_{(0)}}\tensor[^{\sigma}]{\otimes}{}\tensor{h}{^{\2}_{(0)(0)}}\otimes\tensor{h}{^{\2}_{(0)(1)}}\\
				&=\sigma^{-1}\left(\tensor{h}{^{\1}_{(0)}},\tensor{h}{^{\2}_{(0)(-1)}}\right)\tensor{h}{^{\1}_{(0)}}\tensor[^{\sigma}]{\otimes}{}\tensor{h}{^{\2}_{(0)(0)}}\otimes\tensor{h}{^{\2}_{(1)}}\\
				&=\sigma^{-1}\left(\tensor{h}{_{(1)}^{\1}_{(0)}},\tensor{h}{_{(1)}^{\2}_{(-1)}}\right)\tensor{h}{_{(1)}^{\1}_{(0)}}\otimes\tensor{h}{_{(1)}^{\2}_{(0)}}\tensor[^{\sigma}]{\otimes}{}\tensor{h}{_{(2)}^{\2}}\\
				&=\left[(\tensor[_{\sigma}]{\ell}{}\otimes\mathrm{id}_H)\circ\Delta\right](h),
			\end{split}
		\end{equation*}
		
		\begin{equation*}
			\begin{split}
				\left[(\lambda\otimes\mathrm{id}_{\tensor[_{\sigma}]{A}{}})\circ\tensor[_{\sigma}]{\ell}{}\right](h)&=\sigma^{-1}\left(\tensor{h}{^{\1}_{(-1)}},\tensor{h}{^{\2}_{(-1)}}\right)S^{-1}\left(\tensor{h}{^{\1}_{(0)(1)}}\right)\otimes\tensor{h}{^{\1}_{(0)(0)}}\tensor[^{\sigma}]{\otimes}{}\tensor{h}{^{\2}_{(0)}}\\
				&=\sigma^{-1}\left(\tensor{h}{^{\1}_{(0)(-1)}},\tensor{h}{^{\2}_{(-1)}}\right)S^{-1}\left(\tensor{h}{^{\1}_{(1)}}\right)\otimes\tensor{h}{^{\1}_{(0)(0)}}\tensor[^{\sigma}]{\otimes}{}\tensor{h}{^{\2}_{(0)}}\\
				&=S^{-1}\left(S(h_{(1)})\right)\otimes\sigma^{-1}\left(\tensor{h}{_{(2)}^{\1}_{(-1)}},\tensor{h}{_{(2)}^{\2}_{(-1)}}\right)\tensor{h}{_{(2)}^{\1}_{(0)}}\tensor[^{\sigma}]{\otimes}{}\tensor{h}{_{(2)}^{\2}_{(0)}}\\
				&=\left[(\mathrm{id}_H\otimes\tensor[_{\sigma}]{\ell}{})\circ\Delta\right](h),\\
			\end{split}
		\end{equation*}
		
		\begin{equation*}
			\begin{split}
				(\tensor[_{\sigma}]{\can}{}\circ\pi_{\tensor[_{\sigma}]{B}{}}\circ\tensor[_{\sigma}]{\ell}{})(h)&=\sigma^{-1}\left(\tensor{h}{^{\1}_{(-1)}},\tensor{h}{^{\2}_{(-1)}}\right)\tensor{h}{^{\1}_{(0)}}\bullet_{\sigma}\tensor{h}{^{\2}_{(0)}_{(0)}}\otimes\tensor{h}{^{\2}_{(1)}}\\
				&=\sigma\left(\tensor{h}{_{(1)}^{\1}_{(0)(-1)}},\tensor{h}{_{(1)}^{\2}_{(0)(-1)}}\right)\sigma^{-1}\left(\tensor{h}{_{(1)}^{\1}_{(-1)}},\tensor{h}{_{(1)}^{\2}_{(-1)}}\right)\\
				&\tensor{h}{_{(1)}^{\1}_{(0)}}\tensor{h}{_{(1)}^{\2}_{(0)}}\otimes h_{(2)}\\
				&=(\sigma*\sigma^{-1})\left(\tensor{h}{_{(1)}^{\1}_{(-1)}},\tensor{h}{_{(1)}^{\2}_{(-1)}}\right)\tensor{h}{_{(1)}^{\1}_{(0)}}\tensor{h}{_{(1)}^{\2}_{(0)}}\otimes h_{(2)}\\
				&=\tensor{h}{_{(1)}^{\1}}\tensor{h}{_{(1)}^{\2}}\otimes h_{(2)}=1_A\otimes h,
			\end{split}
		\end{equation*}
		proving that $\tensor[_{\sigma}]{\ell}{}$ is a strong connection for $\tensor[_{\sigma}]{B}{}\subseteq\tensor[_{\sigma}]{A}{}$. We used the equations \eqref{eq:strong_conn_Sw1}-\eqref{eq:strong_conn_Sw3}, that $\tensor[_{\sigma}]{A}{}$ is a $(\tensor[_{\sigma}]{K}{},H)$-bicomodule and the convolution invertibility of $\sigma$. We have also dropped the symbol $\Sigma$ for the right and left $H$-coactions in the equations to simplify the expression. Since $\phi_{A,A}$ is an isomorphism, we also have that the opposite implication is true.
	\end{proof}
	Then, for the deformed principal $H$-comodule algebra $\tensor[_{\sigma}]{B}{}\subseteq\tensor[_{\sigma}]{A}{}$ with external symmetry $K_{\sigma}$ the strong connection is given by
	\beq
	\label{eq:strong_conn_left}
	\tensor[_{\sigma}]{\ell}{}(h)=\sigma^{-1}\left(\tensor{h}{^{\1}_{(-1)}},\tensor{h}{^{\2}_{(-1)}}\right)\tensor{h}{^{\1}_{(0)}}\tensor[^{\sigma}]{\otimes}{}\tensor{h}{^{\2}_{(0)}},\quad h\in H.
	\eeq
	
	If we now consider a $H$-cleft extension $B\subseteq A$ with left $B$-linear and right $H$-colinear isomorphism $\theta:B\otimes H\longrightarrow A$, we have that the deformed extension $\tensor[_{\sigma}]{B}{}\subseteq\tensor[_{\sigma}]{A}{}$ is $H$-cleft if the isomorphism $\theta$ is also left $K$-colinear. In general, this might not be true, and in that case, one has to apply to $\theta$ the map $\fS$ of Proposition 3.17 in \cite{Aschieri_2016} and check if $\fS(\theta)\in\tensor[_{\tensor[_{\sigma}]{B}{}}]{\Hom}{^{H}}(\tensor[_{\sigma}]{B}{}\otimes H,\tensor[_{\sigma}]{A}{})$ is invertible. In general, a strong connection in this case is given by the combination of \eqref{eq:strong_conn_cleft} and \eqref{eq:strong_conn_left}
	
	\begin{equation}
		\label{eq:strong_conn_deformed_left}
		\tensor[_{\sigma}]{\ell}{}(h)=\sigma^{-1}\left(\phi^{-1}(h_{(1)})_{(-1)},\phi(h_{(2)})_{(-1)}\right)\phi^{-1}(h_{(1)})_{(0)}\tensor[^{\sigma}]{\otimes}{}\phi(h_{(2)})_{(0)},\quad h\in H.
	\end{equation}

	We close this section by finding  a strong connection when we deform a principal comodule algebra with both a $2$-cocycle of the structure Hopf algebra and a $2$-cocycle coming from an external symmetry.
	For a principal $H$-comodule algebra with external symmetry $K$, it was shown in \cite{Aschieri_2016} that if we first deform using $\gamma:H\otimes H\longrightarrow\bK$ and then $\sigma:K\otimes K\longrightarrow\bK$ we obtain the same result as performing the deformation in the reverse order.
	
	We now show that the deformed strong connection also shares the same feature
	\begin{thm}
		The algebra $B\subseteq A$ is a principal $H$-comodule algebra with external symmetry $K$ if and only if $\tensor[_{\sigma}]{B}{}\subseteq\tensor[_{\sigma}]{A}{_{\gamma}}$ is a principal $H_{\gamma}$-comodule algebra with externa symmetry $\tensor[_{\sigma}]{K}{}$.
	\end{thm}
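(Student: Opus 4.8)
The plan is to combine Theorems \ref{thm:deformed_strong_conn_right} and \ref{thm:strong_conn_left} by performing the two deformations one after the other and exhibiting the resulting strong connection explicitly. Start from a principal $H$-comodule algebra $B\subseteq A$ with external symmetry $K$ and a strong connection $\ell(h)=h^{\1}\otimes h^{\2}$. First deform by the $2$-cocycle $\gamma$ of $H$: by Theorem \ref{thm:deformed_strong_conn_right}, $B\subseteq A_{\gamma}$ is a principal $H_{\gamma}$-comodule algebra with strong connection $\ell_{\gamma}(h)=\tensor{h}{_{(2)}^{\1}}\otimes^{\gamma}\tensor{h}{_{(2)}^{\2}}u_{\gamma}(h_{(1)})$ as in \eqref{eq:strong_conn_def_right}. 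One must first check that $A_{\gamma}$ is still a $(K,H_{\gamma})$-bicomodule algebra, so that $K$ remains an external symmetry: the left $K$-coaction is unchanged, it commutes with the (unchanged) right coaction, and it is an algebra morphism for $\cdot_{\gamma}$ because $\gamma$ is a $K$-comodule map (the $\gamma$-factor is a scalar, hence $K$-coinvariant). Then deform $A_{\gamma}$ by the $2$-cocycle $\sigma$ of $K$: applying the formula \eqref{eq:strong_conn_left} of Theorem \ref{thm:strong_conn_left} to the principal $H_{\gamma}$-comodule algebra $B\subseteq A_{\gamma}$ with external symmetry $K$ yields a strong connection
\begin{equation*}
	\tensor[_{\sigma}]{\ell}{_{\gamma}}(h)=\sigma^{-1}\!\left(\tensor{h}{_{(2)}^{\1}_{(-1)}},\tensor{h}{_{(2)}^{\2}_{(-1)}}\right)\tensor{h}{_{(2)}^{\1}_{(0)}}\tensor[^{\sigma}]{\otimes}{^{\gamma}}\tensor{h}{_{(2)}^{\2}_{(0)}}\,u_{\gamma}(h_{(1)}),\quad h\in H_{\gamma},
\end{equation*}
where the left $K$-coactions on $A_{\gamma}$ agree with those on $A$ and $u_{\gamma}$ is a scalar, hence commutes past $\sigma^{-1}$ and the deformed tensor symbol.

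The main body of the proof is then a direct verification that this $\tensor[_{\sigma}]{\ell}{_{\gamma}}$ satisfies the three defining equations \eqref{eq:strong_conn1}--\eqref{eq:strong_conn3} for the extension $\tensor[_{\sigma}]{B}{}\subseteq\tensor[_{\sigma}]{A}{_{\gamma}}$ as a principal $H_{\gamma}$-comodule algebra: right $H_{\gamma}$-colinearity in the second leg with respect to $\rho^{\gamma}$, left $H_{\gamma}$-colinearity in the first leg with respect to $\lambda^{\gamma}$ (which uses $S_{\gamma}^{-1}$ and the identity $S_{\gamma}^{-1}\circ S_{\gamma}=\id$ together with the $u_{\gamma}*u_{\gamma}^{-1}=\epsilon$ trick exactly as in the proof of Theorem \ref{thm:deformed_strong_conn_right}), and the lift-of-translation-map condition $\tensor[_{\sigma}]{\can}{_{\gamma}}\circ\pi\circ\tensor[_{\sigma}]{\ell}{_{\gamma}}=1_A\otimes(-)$. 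The last one is where the two deformed products $\cdot_{\gamma}$ and $\bullet_{\sigma}$ appear simultaneously: the product in $\tensor[_{\sigma}]{A}{_{\gamma}}$ is $a\ast\tilde a=\sigma(a_{(-1)},\tilde a_{(-1)})\,a_{(0)}\cdot_{\gamma}\tilde a_{(0)}$, and one expands $\tensor{h}{_{(2)}^{\1}}\ast\tensor{h}{_{(2)}^{\2}_{(0)}}$, uses the $(K,H_{\gamma})$-bicomodule relation to move the right $H$-coaction out, then the $\sigma\ast\sigma^{-1}=\epsilon$ cancellation collapses the $\sigma$-factors, then the $\gamma$-computation from the proof of Theorem \ref{thm:deformed_strong_conn_right} collapses the $\gamma$-factors via $u_{\gamma}\ast u_{\gamma}^{-1}=\epsilon$, leaving $\tensor{h}{_{(1)}^{\1}}\tensor{h}{_{(1)}^{\2}}\otimes h_{(2)}=1_A\otimes h$. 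Alternatively, and more cleanly, one can assemble the proof diagrammatically: the composite deformation functor $\Sigma\circ\Gamma$ from $(K,H)$-bicomodules (resp.\ bicomodule algebras) to $(\tensor[_{\sigma}]{K}{},H_{\gamma})$-bicomodules is monoidal, the associated canonical-map diagram is the vertical paste of the two diagrams in \eqref{eq:diagrm_Hopf-Galois} and the one following Theorem \ref{thm:deformed_ext_symm}, and $\tensor[_{\sigma}]{\ell}{_{\gamma}}$ is by construction $\phi^{-1}_{A,A}\circ\Sigma(\alpha^{-1}_{A,A}\circ\Gamma(\ell)\circ\ff)\circ(\text{identification }\tensor[_{\sigma}]{H_{\gamma}}{}\simeq H_{\gamma})$, so its strong-connection property is inherited from that of $\ell$ by functoriality; the two monoidal natural isomorphisms compose to give the comparison isomorphism, and one only records the resulting closed formula.

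The hard part will be bookkeeping: keeping straight which coaction ($\rho$, $\rho^{\gamma}$, $\lambda$, $\lambda^{\gamma}$, left $K$- versus $\tensor[_{\sigma}]{K}{}$-coaction) acts on which leg, and making sure the scalar $u_{\gamma}$ and the scalars produced by $\sigma^{\pm1}$ and $\gamma^{\pm1}$ are attached to the correct Sweedler components before any cancellation — in the combined case the right $H$-leg carries an iterated coaction $\tensor{h}{_{(2)}^{\2}_{(0)(0)}}$, $\tensor{h}{_{(2)}^{\2}_{(0)(1)}}$, etc., and the bicomodule identity \eqref{eq:bicomodule_eq} must be invoked at the right moment to interchange the $K$- and $H_{\gamma}$-coactions. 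The conceptual content, however, is modest once one notices that the $\gamma$-deformation touches only the fiber/product-with-$H$-structure and the $\sigma$-deformation only the $K$-structure, so the two computations are essentially orthogonal and the combined strong connection is just their superposition; concretely one verifies directly that applying \eqref{eq:strong_conn_left} to \eqref{eq:strong_conn_def_right} gives the same answer as applying \eqref{eq:strong_conn_def_right} to \eqref{eq:strong_conn_left}, which is the promised order-independence. As before, invertibility of the comparison isomorphisms $\alpha_{A,A}$, $\ff$ and $\phi_{A,A}$ gives the converse implication for free.
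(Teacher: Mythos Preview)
Your proposal is correct and follows essentially the same route as the paper: both arguments compose the two earlier deformation theorems, write down the resulting strong connection $\tensor[_{\sigma}]{\ell}{_{\gamma}}(h)=\sigma^{-1}(\tensor{h}{_{(2)}^{\1}_{(-1)}},\tensor{h}{_{(2)}^{\2}_{(-1)}})\,\tensor{h}{_{(2)}^{\1}_{(0)}}\tensor[^{\sigma}]{\otimes}{^{\gamma}}\tensor{h}{_{(2)}^{\2}_{(0)}}\,u_{\gamma}(h_{(1)})$, verify order-independence $(\tensor[_{\sigma}]{\ell}{})_{\gamma}=\tensor[_{\sigma}]{(\ell_{\gamma})}{}$, and appeal to invertibility of $\alpha_{A,A}$, $\ff$, $\phi_{A,A}$ for the converse. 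The paper places the explicit order-independence computation at the center (it is the only long calculation there), whereas you mention it as a closing check and instead propose re-verifying the strong-connection axioms directly---this re-verification is unnecessary, since successive application of Theorems~\ref{thm:deformed_strong_conn_right} and~\ref{thm:strong_conn_left} already guarantees the result.
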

	\begin{proof}
		We have to write down the same diagram of \ref{thm:deformed_strong_conn_right} and \ref{thm:strong_conn_left}, in this context they are         \begin{equation*}
			\begin{tikzcd}
				\underline{H_{\gamma}} \arrow[rr, "_{\sigma}(\ell_{\gamma})"] \arrow[rrdd, "\Sigma(\ell_{\gamma})"'] & & \tensor[_{\sigma}]{A}{_{\gamma}}\tensor[^{\sigma}]{\otimes}{^{\gamma}}\tensor[_{\sigma}]{A}{_{\gamma}} \arrow[dd, "\phi_{A_{\gamma},A_{\gamma}}"]\\
				& & \\
				& & _{\sigma}(A_{\gamma}\otimes^{\gamma}A_{\gamma})
			\end{tikzcd}, \quad \begin{tikzcd}
				\underline{H}_{\gamma} \arrow[rr, "\Gamma(_{\sigma}\ell)"] \arrow[dd, "\ff^{-1}"'] & & (_{\sigma}A\tensor[^{\sigma}]{\otimes}{} _{\sigma}A)_{\gamma} \arrow[dd, "\alpha^{-1}_{_{\sigma}A,_{\sigma}A}"]\\
				& &\\
				\underline{H_{\gamma}} \arrow[rr, "(_{\sigma}\ell)_{\gamma}"] & & \tensor[_{\sigma}]{A}{_{\gamma}}\tensor[^{\sigma}]{\otimes}{^{\gamma}}\tensor[_{\sigma}]{A}{_{\gamma}}
			\end{tikzcd}
		\end{equation*}
		We respectively obtain
		\begin{equation*}
			_{\sigma}(\ell_{\gamma}):=\phi^{-1}_{A_{\gamma},A_{\gamma}}\circ\Sigma(\ell_{\gamma}),\quad (_{\sigma}\ell)_{\gamma}:=\alpha^{-1}_{_{\sigma}A,_{\sigma}A}\circ\Gamma(_{\sigma}\ell)\circ\ff,
		\end{equation*}
		the explicit form of the first is given by
		\begin{equation*}
			_{\sigma}(\ell_{\gamma})(h)=\sigma^{-1}\left(\tensor{h}{_{(2)}^{\1}_{(-1)}},\tensor{h}{_{(2)}^{\2}_{(-1)}}\right)\tensor{h}{_{(2)}^{\1}_{(0)}}\tensor[^{\sigma}]{\otimes}{^{\gamma}}\tensor{h}{_{(2)}^{\2}_{(0)}}u_{\gamma}(h_{(1)}),
		\end{equation*}
		for any $h\in\underline{H_{\gamma}}$.
		The second one gives us
		\begin{align*}
			(_{\sigma}\ell)_{\gamma}(h)&=\sigma^{-1}\left(\tensor{h}{_{(3)}^{\1}_{(-1)}},\tensor{h}{_{(3)}^{\2}_{(-1)}}\right)\tensor{h}{_{(3)}^{\1}_{(0)(0)}}\tensor[^{\sigma}]{\otimes}{^{\gamma}}\tensor{h}{_{(3)}^{\2}_{(0)(0)}}u_{\gamma}(h_{(1)})\\
			&\gamma\left(\tensor{h}{_{(3)}^{\1}_{(0)(1)}},\tensor{h}{_{(3)}^{\2}_{(0)(1)}}\right)\gamma^{-1}(S(h_{(2)},h_{(4)})\\
			&=\sigma^{-1}\left(\tensor{h}{_{(4)}^{\1}_{(-1)}},\tensor{h}{_{(4)}^{\2}_{(0)(-1)}}\right)\tensor{h}{_{(4)}^{\1}_{(0)}}\tensor[^{\sigma}]{\otimes}{^{\gamma}}\tensor{h}{_{(4)}^{\2}_{(0)(0)}}u_{\gamma}(h_{(1)})\\
			&\gamma\left(S(h_{(3)}),\tensor{h}{_{(4)}^{\2}_{(1)}}\right)\gamma^{-1}(S(h_{(2)},h_{(5)})\\
			&=\sigma^{-1}\left(\tensor{h}{_{(4)}^{\1}_{(-1)}},\tensor{h}{_{(4)}^{\2}_{(-1)}}\right)\tensor{h}{_{(4)}^{\1}_{(0)}}\tensor[^{\sigma}]{\otimes}{^{\gamma}}\tensor{h}{_{(4)}^{\2}_{(0)}}u_{\gamma}(h_{(1)})\\
			&\gamma\left(S(h_{(3)}),h_{(5)}\right)\gamma^{-1}\left(S(h_{(2)},h_{(6)}\right)\\
			&=\sigma^{-1}\left(\tensor{h}{_{(3)}^{\1}_{(-1)}},\tensor{h}{_{(3)}^{\2}_{(-1)}}\right)\tensor{h}{_{(3)}^{\1}_{(0)}}\tensor[^{\sigma}]{\otimes}{^{\gamma}}\tensor{h}{_{(3)}^{\2}_{(0)}}u_{\gamma}(h_{(1)})\\
			&(\gamma*\gamma^{-1})\left(S(h_{(2)}),h_{(4)}\right)\\
			&=\sigma^{-1}\left(\tensor{h}{_{(2)}^{\1}_{(-1)}},\tensor{h}{_{(2)}^{\2}_{(-1)}}\right)\tensor{h}{_{(2)}^{\1}_{(0)}}\tensor[^{\sigma}]{\otimes}{^{\gamma}}\tensor{h}{_{(2)}^{\2}_{(0)}}u_{\gamma}(h_{(1)}),
		\end{align*}
		for any $h\in\underline{H_{\gamma}}$, then we have $(_{\sigma}\ell)_{\gamma}= \tensor[_{\sigma}]{(\ell_{\gamma})}{}$. The same computation in the proofs of \ref{thm:deformed_strong_conn_right} and \ref{thm:strong_conn_left} lead us to conclusion that $(_{\sigma}\ell)_{\gamma}$, and consequently $\tensor[_{\sigma}]{(\ell_{\gamma})}{}$, is a strong connection for $\tensor[_{\sigma}]{B}{}\subseteq\tensor[_{\sigma}]{A}{_{\gamma}}$ if $\ell$ is, and because the maps involved in its definition are all invertible we conclude that the reverse implication holds too.
	\end{proof}
	
	This last result allows us to write $\tensor[_{\sigma}]{\ell}{_{\gamma}}$ for a strong connection of $\tensor[_{\sigma}]{B}{}\subseteq\tensor[_{\sigma}]{A}{_{\gamma}}$ with no risk of confusion
	\begin{equation}
		\label{eq:strong_conn_both_deformed}
		\tensor[_{\sigma}]{\ell}{_{\gamma}}(h)=\sigma^{-1}\left(\tensor{h}{_{(2)}^{\1}_{(-1)}},\tensor{h}{_{(2)}^{\2}_{(-1)}}\right)\tensor{h}{_{(2)}^{\1}_{(0)}}\tensor[^{\sigma}]{\otimes}{^{\gamma}}\tensor{h}{_{(2)}^{\2}_{(0)}}u_{\gamma}(h_{(1)}),\quad h\in H_{\gamma}.
	\end{equation}

	\section{Characteristic classes and naturality}
	In this section, we recall the basic definitions of cyclic homology for algebras and review the theory of the noncommutative Chern-Weil homomorphism for Hopf-Galois extensions, as introduced in \cite{hajac2021cyclic}. We then introduce characteristic classes for principal comodule algebras and prove that they are natural transformations generalizing the classic case.    
	
	\subsection{Hochschild and cyclic homology}
	In its full generality \cite{loday2013cyclic}, Hochschild homology is defined for any bimodule over any algebra $B$. For our purpose, the case where the bimodule is the algebra $B$ itself is enough. The motivation for this homology theory stems from the Hochschild-Kostant-Rosenberg theorem, which characterizes Hochschild homology for commutative algebras.
	
	Given an algebra $B$, define the complex $C_n(B):=B^{\otimes(n+1)}$. Moreover, introduce the \textit{face operators} $d_i:C_n(B)\longrightarrow C_{n-1}(B)$
	\begin{align*}
		d_0(b_0\otimes\dots\otimes b_n)&:=b_0b_1\otimes\dots\otimes b_n,\\
		d_i(b_0\otimes\dots\otimes b_n)&:=b_0\otimes\dots\otimes b_ib_{i+1}\otimes\dots\otimes b_n,\quad 1\leq i\leq n\\
		d_n(b_0\otimes\dots\otimes b_n)&:=b_nb_0\otimes\dots\otimes b_{n-1}.
	\end{align*}
	One has that $d:=\sum_{i=0}^{n}(-1)^id_i$ satisfies $d\circ d=0$. Thus, we have the sequence
	\begin{equation*}
		\dots\xrightarrow[]{d} C_n(B)\xrightarrow[]{d} C_{n-1}(B)\xrightarrow[]{d} C_{n-2}(B)\xrightarrow[]{d}\dots\xrightarrow{d}C_0(B), 
	\end{equation*}
	with the corresponding homology theory defined by the face operator $d$, which is called the \textit{Hochschild homology} of $B$ and denoted by $HH_*(B)$.
	
	There is a natural action of the cyclic group $\bZ_{n+1}$ on $C_n(B)$ given by
	\begin{equation}
		\label{eq:cyclic_action}
		t_n(b_0\otimes\dots\otimes b_n)=(-1)^n(b_n\otimes b_0\dots\otimes b_{n-1}).
	\end{equation}
	the generator $t_n\in\bZ_{n+1}$. It is straightforward to check that $t^{n+1}=\id$. The $\bZ_{n+1}$-invariant elements in $C_n(B)$ are called \textit{cyclic tensor}, and we denote them by $C^{\delta}_n(B):=C_n(B)/\mathrm{Ker}(\delta)$ where $\delta:=\id-t_n$. There are the \textit{norm operator} $N:=1+t+\dots+t^n:C_n(B)\longrightarrow C_n(B)$ and the \textit{truncated operator} $d':=\sum_{i=0}^{n-1}(-1)^id_i:C_n(B)\longrightarrow C_{n-1}(B)$ that satisfy
	\begin{equation*}
		(\id-t)d'=d(\id-t),\quad d'N=Nd,
	\end{equation*}
	With this, the \textit{Connes complex} $(C^{\delta}_*(B),d)$ has sequence
	\begin{equation*}
		\dots\xrightarrow{d}C^{\delta}_n(B)\xrightarrow{d}C^{\delta}_{n-1}(B)\xrightarrow{d}C^{\delta}_*(B)\xrightarrow{d}\dots\xrightarrow{d}C^{\delta}_0(B),
	\end{equation*}
	and homology theory
	\begin{equation*}
		HC_n(B):=\frac{\mathrm{Ker}(d:C^{\delta}_n(B)\longrightarrow C^{\delta}_{n-1}(B))}{\mathrm{Im}(d:C^{\delta}_{n+1}(B)\longrightarrow C^{\delta}_n(B))},\quad HC(B):=\oplus_{n=0}^{\infty}HC_n(B)
	\end{equation*}
	called the \textit{cyclic homology} of $B$.
	

	\subsection{The Chern-Weil homomorphism for principal comodule algebras}
	Given a Hopf algebra $H$ we have the following vector space
	\begin{equation}
		\label{eq:cotarces}
		H^{tr}:=\{h\in H|h_{(1)}\otimes h_{(2)}=h_{(2)}\otimes h_{(1)}\},
	\end{equation}
	that we call the space of \textit{cotraces}. The name is justified by the fact that any element of $H^{tr}$ defines a trace in the space $\mathrm{Hom}(H,\bK)$, in fact for $h\in H^{tr}$ one has
	\begin{equation}
		\label{eq:trace_of_cotrace}
		\tau_h:\mathrm{Hom}(H,\bK)\longrightarrow \bK,\quad f\longmapsto f(h),
	\end{equation}
	from which one finds that $\tau_h(f*g)=\tau_h(g*f)$ for all $f,g\in\mathrm{Hom}(H,\bK)$. For the Hopf algebra $O(G)$ of the regular functions of an linear algebraic group $G$ we have the identification
	\begin{equation*}
		\begin{split}
			O(G)^{tr}&=\{f\in O(G)|f(gh)=f(hg),\quad \forall g,h\in G\}\\
			&=\{f\in O(G)|f(hgh^{-1})=f(g),\forall g,h\in G\}\\
			&=O(\mathrm{Ad}(G))^G,
		\end{split}   
	\end{equation*}
	with the algebra of adjoint invariant elements $O(\mathrm{Ad}(G))^G$. Via a filtration using the ideal $\mathrm{ker}(\epsilon)$, this algebra gives the algebra $\bK[\fg]^G$ of adjoint-invariant polynomials on the Lie algebra $\fg$.
	
	Cotraces can be identified with the space $H^{tr}\simeq H\square^{H\otimes H^{op}}(\overbrace{H\square^H\dots\square^HH}^{n+1})$ for any $n\in\mathrm{N}$ as shown in \cite[Lemma 4.2]{hajac2021cyclic}.   
	
	Consider now a principal $H$-comodule algebra $B\subseteq A$. The cotensor product $A\square^HA$ and $(A\otimes A)^{coH}$ are isomorphic as vector spaces  \cite[Lemma 3.1]{schneider1990principal}. For the rest of the section, we use the short notation $M:=A\square^HA$. Recall that the latter has a $B$-coring structure \cite{brz-wis} with counit given by
	\beq
	\label{eq:counit_coring}
	\underline{\epsilon}(a\otimes\tilde{a}):=a\tilde{a},
	\eeq
	for all $a\otimes\tilde{a}\in M$. Because this map is left $B$-linear we can define multiplication in the following way 
	\begin{equation}
		\label{eq:multiplication_row_ext}
		m\cdot m':=\underline{\epsilon}(m)m'.
	\end{equation}
	For the Chern-Weil theory, one only needs that $(M,\underline{\epsilon})$ is a left $B$-module.
	
	Collecting the results from Lemma 4.3 to 4.6 in \cite{hajac2021cyclic}, we have that, given a strong connection $\ell:H\longrightarrow A\otimes A$ a map
	\begin{equation}
		\label{eq:c_n_1}
		c_n(\ell):H^{tr}\longrightarrow M^{\otimes (n+1)},\quad h\longmapsto l(h_{(1)})\otimes\dots\otimes l(h_{(n+1)}), 
	\end{equation}
	is induced for any $n\in\mathbb{N}$. The element
	\begin{equation}
		\label{eq:c_n_2}
		c_n(\ell)(h):=\left(\tensor{h}{_{(n+1)}^{\2}}\otimes\tensor{h}{_{(1)}^{\1}}\right)\otimes\dots\otimes\left(\tensor{h}{_{(n)}^{\2}}\otimes\tensor{h}{_{(n+1)}^{\1}}\right),
	\end{equation}
	is cyclic-symmetric in $M^{\otimes(n+1)}$ and for any face operator $d_i$ with $i=0,\dots,n$ one has
	\begin{equation}
		\label{eq:cyclic_op_on_c_n}
		d_ic_n(\ell)(h)=c_{n-1}(\ell)(h),\quad \forall h\in H^{tr}
	\end{equation}  
	
	Now let $HC_*(M)$ be the cyclic homology of the algebra $M$ with the product \eqref{eq:multiplication_row_ext}. The identities \eqref{eq:c_n_2} and \eqref{eq:cyclic_op_on_c_n} allow us to define a $2n$-cycle in $HC_{2n}(M)$ explicitly given by
	\beq
	\label{eq:Chern-Weil_map}
	\widetilde{\mathrm{chw}}_n(\ell)(h):=\sum_{i=0}^{2n}(-1)^{\lfloor\frac{i}{2}\rfloor}\frac{i!}{\lfloor\frac{i}{2}\rfloor!}c_i(\ell)(h),\quad h\in H^{tr}.
	\eeq
	The fact that $\widetilde{\mathrm{chw}}_n(l)(h)$ defines a $2n$-cycle follows from the identities
	\begin{align}
		\label{eq:cyclic_Chern1}
		d(2c_{2n}(\ell)(h))&=(1-t)c_{2n-1}(\ell)(h),\\
		\label{eq:cyclic_Chern2}
		d'(nc_{2n-1}(\ell)(h))&=Nc_{2n-2}(\ell)(h),
	\end{align}
	for all $h\in H^{tr}$. 
	
	
	We can now use the counit of $M$ to induce a map from $M^{\otimes(n+1)} $ into $B^{\otimes(n+1)}$, this is done by applying it to every factor of the tensor product. For any $n\in\mathbb{N}$ one has the formula for any $h\in H^{tr}$
	\beq
	\label{eq:c_n_base_algebra}
	x_n(\ell,h):=\left(\underline{\epsilon}^{\otimes(n+1)}\circ c_n(\ell)\right)(h)=\tensor{h}{_{(n+1)}^{\2}}\tensor{h}{_{(1)}^{\1}}\otimes\dots\otimes\tensor{h}{_{(n)}^{\2}}\tensor{h}{_{(n+1)}^{\1}}\in B^{\otimes(n+1)}.
	\eeq
	Thus, we have the following
	\begin{defn}
		\label{defn:noncommutative_Chern-Weil}
		For any principal $H$-comodule algebra $B\subseteq A$ with strong connection $\ell$, the \textbf{Chern-Weil homomorphism} is given by
		\begin{equation*}
			\mathrm{chw}_n(\ell):H^{tr}\longrightarrow\mathrm{HC}_{2n}(B),\quad h\longmapsto \sum_{i=0}^{2n}(-1)^{\lfloor\frac{i}{2}\rfloor}\frac{i!}{\lfloor\frac{i}{2}\rfloor!}x_i(\ell,h),
		\end{equation*}    
	\end{defn}

	\subsection{Pullbacks and naturality}
	In this subsection, which closes the chapter, we study how the Chern-Weil homomorphism behaves with respect to a noncommutative pullback of principal comodule algebras. We start by stating and proving the following
	\begin{prop}
		\label{prop:pushforward}
		Let $B\subseteq A$ be a principal $H$-comodule algebra, $\bar{A}$ a right $H$-comodule algebra with coaction invariant elements $\bar{B}$, and $f:A\longrightarrow \bar{A}$ a unital right $H$-comodule algebra morphism. Then the have
		\begin{enumerate}
			\item $\bar{B}\subseteq\bar{A}$ is a principal $H$-comodule algebra,
			\item There is a left $\bar{B}$-linear and right $H$-colinear isomorphism $\bar{A}\simeq \bar{B}\otimes_BA$,
			\item If $B\subseteq A$ is a cleft extension so is $\bar{B}\subseteq\bar{A}$ is.
		\end{enumerate}
	\end{prop}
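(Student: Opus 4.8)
All three assertions follow from transporting a strong connection of $B\subseteq A$ along $f$. Note first that, since $f$ is $H$-colinear and $B=A^{coH}$, $\bar B=\bar A^{coH}$, one has $f(B)\subseteq\bar B$; hence $\bar A$ (and $\bar B$) is a $B$-bimodule through $f$ and $f$ is a $B$-bimodule map, so that $\bar B\otimes_B A$ and $\bar A\otimes_B A$ are meaningful. For (1), I would fix a strong connection $\ell\colon H\to A\otimes A$ and set $\bar\ell:=(f\otimes f)\circ\ell$, i.e. $\bar\ell(h)=f(h^{\1})\otimes f(h^{\2})$, and then check that $\bar\ell$ is a strong connection for $\bar B\subseteq\bar A$. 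The colinearity identities \eqref{eq:strong_conn1} and \eqref{eq:strong_conn2} for $\bar\ell$ follow immediately from those for $\ell$ together with the fact that $f$ intertwines the right $H$-coactions, and hence the induced left coactions $\lambda$ built from $\rho$ and $S^{-1}$; and $m_{\bar A}\circ\bar\ell=\epsilon(-)1_{\bar A}$ follows from $h^{\1}h^{\2}=\epsilon(h)1_A$ since $f$ is a unital algebra map. As \eqref{eq:strong_conn1} together with $m_{\bar A}\circ\bar\ell=\epsilon(-)1_{\bar A}$ already force $\can_{\bar A}\circ\pi_{\bar B}\circ\bar\ell=1_{\bar A}\otimes(-)$ (the computation made right below \eqref{eq:strong_conn_Sw3}), $\bar\ell$ is a strong connection, and by \cite{schauenburg2005generalized,dkabrowski2001strong} this is equivalent to $\bar B\subseteq\bar A$ being a principal $H$-comodule algebra.

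For (2), consider $\mu\colon\bar B\otimes_B A\to\bar A$, $\bar b\otimes_B a\mapsto\bar b\,f(a)$; it is $B$-balanced, plainly left $\bar B$-linear, and right $H$-colinear because $\bar b$ is coinvariant and $f$ is colinear. For the inverse I would use the strong connection $\ell$ of $A$ and define $\nu\colon\bar A\to\bar A\otimes_B A$ by $\nu(x)=x_{(0)}\,f(x_{(1)}^{\1})\otimes_B x_{(1)}^{\2}$, and then establish three things: (a) $\mu\circ\nu=\id_{\bar A}$, which is immediate from $h^{\1}h^{\2}=\epsilon(h)1_A$ and counitality of $\rho_{\bar A}$; (b) $\nu$ in fact takes values in $\bar B\otimes_B A$, by computing $\rho_{\bar A}$ on the left leg of $\nu(x)$ with \eqref{eq:strong_conn_Sw1}, \eqref{eq:strong_conn_Sw2} and the antipode axiom to see that $\nu(x)$ is coinvariant in $\bar A\otimes_B A$, and then using flatness of $A$ over $B$ to identify $(\bar A\otimes_B A)^{coH}=\bar A^{coH}\otimes_B A=\bar B\otimes_B A$; (c) $\nu\circ\mu=\id_{\bar B\otimes_B A}$, using $\rho_{\bar A}(\bar b\,f(a))=\bar b\,f(a_{(0)})\otimes a_{(1)}$ together with the translation-map identity $a_{(0)}\,(a_{(1)})^{\1}\otimes_B(a_{(1)})^{\2}=1_A\otimes_B a$ in $A\otimes_B A$ (equivalently $\can^{-1}\circ\rho_A=1_A\otimes_B(-)$). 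This exhibits $\mu$ as the desired isomorphism of left $\bar B$-modules and right $H$-comodules. Conceptually, (2) is the structure theorem for relative Hopf modules applied to $\bar A$, regarded as an $(A,H)$-relative Hopf module via $f$, over the faithfully flat Hopf–Galois extension $B\subseteq A$; but I would keep the explicit inverse $\nu$ because it is convenient for later use.

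For (3), if $\phi\colon H\to A$ is a cleaving map of $B\subseteq A$, I claim $\bar\phi:=f\circ\phi$ is one for $\bar B\subseteq\bar A$: it is unital, since $f(\phi(1_H))=f(1_A)=1_{\bar A}$; right $H$-colinear, by colinearity of $f$ and of $\phi$; and convolution invertible with inverse $f\circ\phi^{-1}$, as one sees by applying the algebra map $f$ to $\phi*\phi^{-1}=\phi^{-1}*\phi=\epsilon(-)1_A$. Hence $\bar B\subseteq\bar A$ is cleft, and consistently the strong connection $\bar\ell$ of (1) equals $\bar\phi^{-1}(h_{(1)})\otimes\bar\phi(h_{(2)})$, in agreement with \eqref{eq:strong_conn_cleft}.

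The only point that is not purely formal is (2)(b): one has to run the Sweedler-type computation showing $\nu(x)$ is $H$-coinvariant in $\bar A\otimes_B A$, and then invoke flatness of $A$ over $B$ to recognise $(\bar A\otimes_B A)^{coH}$ as $\bar B\otimes_B A$; without this one would only get $\mu\circ\nu=\id_{\bar A}$, not the surjectivity of $\mu$ onto $\bar A$ from $\bar B\otimes_B A$. Everything else reduces to $f$ being a unital $H$-comodule algebra morphism and to the strong-connection identities \eqref{eq:strong_conn_Sw1}–\eqref{eq:strong_conn_Sw3}.
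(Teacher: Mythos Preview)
Your proof takes essentially the same route as the paper's: the same transported strong connection $\bar\ell=(f\otimes f)\circ\ell$ for (1), the same pair of mutually inverse maps for (2) (your $\mu,\nu$ are the paper's $\beta,\alpha$), and the same transported cleaving map $\bar\phi=f\circ\phi$ for (3). The one point where you are actually more careful than the paper is (2)(b): the paper computes that $\alpha(\bar a)$ has coinvariant first leg in $\bar A\otimes_B A$ and silently concludes it lies in $\bar B\otimes_B A$, whereas you correctly flag that this identification $(\bar A\otimes_B A)^{coH}\simeq\bar B\otimes_B A$ uses the (faithful) flatness of $A$ over $B$ built into the definition of a principal comodule algebra.
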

	\begin{proof}
		\begin{enumerate}
			\item One can easily check that for any $H$-colinear morphism $f(B)\subseteq\bar{B}$. We prove that the map
			\begin{equation*}
				\bar{\ell}:=(f\otimes f)\circ \ell:H\longrightarrow \bar{A}\otimes \bar{A}
			\end{equation*}
			is a strong connection for $\bar{B}\subseteq \bar{A}$ by checking the identities \eqref{eq:strong_conn1}-\eqref{eq:strong_conn3}. Let $h$ be any element of $H$:
			\begin{equation*}
				\begin{split}
					[(\mathrm{id}_{\bar{A}}\otimes\bar{\rho})\circ \bar{\ell}](h)&=f(h^{\1})\otimes f(h^{\2})_{(0)}\otimes f(h^{\2})_{(1)}\\
					&=f(h^{\1})\otimes f(\tensor{h}{^{\2}_{(0)}})\otimes \tensor{h}{^{\2}_{(1)}}\\
					&=f(\tensor{h}{_{(1)}^{\1}})\otimes f(\tensor{h}{_{(1)}^{\2}})\otimes h_{(2)}=[(\bar{\ell}\otimes\mathrm{id}_H)\circ\Delta](h),
				\end{split}
			\end{equation*}
			where in the second line we used that $f$ is a right $H$-comodule morphism and in the third one equation \eqref{eq:strong_conn_Sw1}. Moving on, we find
			\begin{equation*}
				\begin{split}
					[(\lambda_{\bar{A}}\otimes\mathrm{id}_{\bar{A}})\circ \bar{\ell}](h)&=S^{-1}(f(h^{\1})_{(1)})\otimes f(h^{\1})_{(0)}\otimes f(h^{\2})\\
					&=S^{-1}(S(h_{(1)}))\otimes f(\tensor{h}{_{(2)}}^{\1})\otimes f(\tensor{h}{_{(2)}}^{\2})\\
					&=h_{(1)}\otimes f(\tensor{h}{_{(2)}}^{\1})\otimes f(\tensor{h}{_{(2)}}^{\2})=[(\mathrm{id}_H\otimes \bar{\ell})\circ\Delta](h), 	
				\end{split}
			\end{equation*}
			we used again that $f$ is a right $H$-comodule morphism and in the third line equation \eqref{eq:strong_conn_Sw2}. Finally, we have
			\begin{equation*}
				\begin{split}
					(\can_{\bar{A}}\circ\pi_B\circ\bar{\ell})(h)&=f(h^{\1})f(h^{\2})_{(0)}\otimes f(h^{\2})_{(1)}\\
					&=f(h^{\1})f(\tensor{h}{^{\2}_{(0)}})\otimes\tensor{h}{^{\2}_{(1)}}\\
					&=f(h^{\1}\tensor{h}{^{\2}_{(0)}})\otimes \tensor{h}{^{\2}_{(1)}}=1_{\bar{A}}\otimes h.
				\end{split}
			\end{equation*}
			We used that $f$ is a $H$-colinear algebra morphism and the definition of translation map.
			\item $\bar{B}$ is a $B$-bimodule via the map $f|_B$, hence $\bar{B}\otimes_B A$ is well-defined and inside of it we have the identification
			\begin{equation*}
				\bar{b}f(b)\otimes_B a=\bar{b}\otimes_Bba,
			\end{equation*}
			for any $\bar{b}\in \bar{B}$, $b\in B$ and $a\in A$. Define the linear maps
			\begin{align*}
				\alpha:\bar{A}\longrightarrow \bar{B}\otimes_B A&,\quad \bar{a}\longmapsto \bar{a}_{(0)}f(\tensor{\bar{a}}{_{(1)}^{\1}})\otimes_B\tensor{\bar{a}}{_{(1)}^{\2}}\\
				\beta:\bar{B}\otimes_B A\longrightarrow\bar{A}&,\quad \bar{b}\otimes_B a\longmapsto \bar{b}f(a)
			\end{align*}
			it is easy to check that they are left $\bar{B}$-module and right $H$-comodule morphism. $\bar{A}$ lands into $\bar{B}\otimes_B A$ through $\alpha$
			\begin{equation*}
				\begin{split}
					(\bar{\rho}\otimes_B\mathrm{id}_A)(\alpha(\bar{a}))&=\bar{a}_{(0)}f(\tensor{\bar{a}}{_{(2)}^{\1}}_{(0)})\otimes\bar{a}_{(1)}\tensor{\bar{a}}{_{(2)}^{\2}_{(1)}}\otimes_B\tensor{\bar{a}}{_{(2)}^{\2}}\\
					&=\bar{a}_{(0)}f(\tensor{\bar{a}}{_{(2)(2)}^{\1}})\otimes\bar{a}_{(1)}S(\tensor{\bar{a}}{_{(2)(1)}})\otimes_B\tensor{\bar{a}}{_{(2)(2)}^{\2}}\\
					&=\bar{a}_{(0)}f(\tensor{\bar{a}}{_{(3)}^{\1}})\otimes\bar{a}_{(1)}S(\tensor{\bar{a}}{_{(2)}})\otimes_B\tensor{\bar{a}}{_{(3)}^{\2}}\\
					&=\bar{a}_{(0)}f(\tensor{\bar{a}}{_{(1)}^{\1}})\otimes1_H\otimes_B\tensor{\bar{a}}{_{(1)}^{\2}},
				\end{split}
			\end{equation*}
			we used the fact that $f$ is a right $H$-comodule morphism, the coassociativity of the coproduct, equation \eqref{eq:strong_conn_Sw2} and the antipode equation $h_{(1)}S(h_{(2)})=\epsilon(h)$ for all $h\in H$.

			We are now ready to prove that $\alpha$ and $\beta$ are the inverse of each other:
			\begin{equation*}
				\begin{split}
					(\alpha\circ\beta)(\bar{b}\otimes_B a)&=\bar{b}f(a_{(0)})f(\tensor{a}{_{(1)}^{\1}})\otimes_B\tensor{a}{_{(1)}^{\2}}=\bar{b}f(a_{(0)}\tensor{a}{_{(1)}^{\1}})\otimes_B\tensor{a}{_{(1)}^{\2}}=\bar{b}\otimes_B a,\\
					(\beta\circ\alpha)(\bar{a})&=\bar{a}_{(0)}f(\tensor{\bar{a}}{_{(1)}^{\1}})f(\tensor{\bar{a}}{_{(1)}^{\2}})=\bar{a}_{(0)}f(\tensor{\bar{a}}{_{(1)}^{\1}}\tensor{\bar{a}}{_{(1)}^{\2}})=\bar{a}_{(0)}\varepsilon(\bar{a}_{(1)})=\bar{a}.
				\end{split}
			\end{equation*}
			\item If $\phi:H\longrightarrow A$ is a cleaving map of $B\subseteq A$ then $\bar{\phi}:=f\circ\phi:H\longrightarrow\bar{A}$ is a cleaving map for $\bar{B}\subseteq\bar{A}$. In fact, $\bar{\phi}$ is right $H$-colinear because it is a composition of right $H$-colinear morphisms, it is convolution invertible with inverse $\bar{\phi}^{-1}=f\circ\phi^{-1}$, and unital because both $f$ and $\phi$ are.
		\end{enumerate}		
	\end{proof}
	
	\begin{remark}
		This result is the noncommutative analogue of the pullback principal bundle via a bundle map. If $P\longrightarrow P/G$ is a principal $G$-bundle and $F:P'\longrightarrow P$ is a $G$-equivariant smooth map, then $P'$ is a $G$-space such that the action is free and transitive on the fibers of $P'\longrightarrow P'/G$. Moreover $F/G:P'/G\longrightarrow P/G$ is a smooth map on the base spaces such that $(F,F/G)$ is a principal $G$-bundle morphism. One can show that there is a diffeomorphism $P'\simeq P'/G\times_{P/G}P$ \cite{hajac2018pullbacks}. 
	\end{remark}
	
	Let now $\cQ b_H(B)$ denote the category of isomorphism classes of principal $H$-comodule algebras over $B$. Its morphisms are left $B$-linear and right $H$-colinear algebra morphisms. We can rephrase Proposition \ref{prop:pushforward} by saying that any right $H$-comodule algebra morphism $f:A\longrightarrow\bar{A}$ restricts and corestics to an algebra morphism $f:B\longrightarrow\bar{B}$ and $\cQ b_H(f):\cQ b_H(B)\longrightarrow\cQ b_H(\bar{B})$ maps the isomorphism class of $A$ to that of $\bar{B}\otimes_B A$. In other words, we have the following
	
	\begin{cor}
		\label{cor:quantum_princ_functor}
		$\cQ b_H(-)$ is a covariant functor from the category of algebras to the category of isomorphism classes of principal $H$-comodule algebras.
	\end{cor}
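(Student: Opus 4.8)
The plan is to verify the two functor axioms (preservation of identities and of composition) directly from the construction in Proposition~\ref{prop:pushforward}, treating the assignment $B\mapsto\cQ b_H(B)$ on objects and $f\mapsto\cQ b_H(f)$ on morphisms. The key observation is that by Proposition~\ref{prop:pushforward}(2), for a unital right $H$-comodule algebra morphism $f\colon A\longrightarrow\bar A$ the functor sends the class $[A]$ to $[\bar B\otimes_B A]$, with $\bar B=\bar A^{coH}$; so the whole verification reduces to natural left $\bar B$-linear, right $H$-colinear isomorphisms between iterated relative tensor products.

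First I would check functoriality on identities: for $\id_A\colon A\longrightarrow A$ we have $\bar A=A$, $\bar B=B$, and $\cQ b_H(\id_A)$ sends $[A]$ to $[B\otimes_B A]$. The canonical map $B\otimes_B A\longrightarrow A$, $b\otimes_B a\mapsto ba$, is a left $B$-linear, right $H$-colinear isomorphism, so $\cQ b_H(\id_A)=\id_{\cQ b_H(B)}$ on the relevant class; a brief remark that the same identification is natural over all classes in $\cQ b_H(B)$ finishes this point. Second, for composable unital right $H$-comodule algebra morphisms $f\colon A\longrightarrow\bar A$ and $g\colon\bar A\longrightarrow\widetilde A$, with invariant subalgebras $B,\bar B,\widetilde B$, I would compare $\cQ b_H(g\circ f)[A]=[\widetilde B\otimes_B A]$ with $\cQ b_H(g)\bigl(\cQ b_H(f)[A]\bigr)=\cQ b_H(g)[\bar B\otimes_B A]=[\widetilde B\otimes_{\bar B}(\bar B\otimes_B A)]$. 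The required isomorphism is the obvious associativity/collapse map
\begin{equation*}
\widetilde B\otimes_{\bar B}(\bar B\otimes_B A)\xrightarrow{\ \simeq\ }\widetilde B\otimes_B A,\qquad \widetilde b\otimes_{\bar B}(\bar b\otimes_B a)\longmapsto \widetilde b\,g(\bar b)\otimes_B a,
\end{equation*}
which is well-defined because the $\bar B$-bimodule structure on $\widetilde B$ is via $g|_{\bar B}$ and the $B$-bimodule structures are compatible through $f$; its inverse is $\widetilde b\otimes_B a\mapsto\widetilde b\otimes_{\bar B}(1_{\bar B}\otimes_B a)$. One then checks it is left $\widetilde B$-linear and right $H$-colinear — routine, using that $g$ is a right $H$-comodule algebra morphism — and that it carries the strong connection $(g\circ f\otimes g\circ f)\circ\ell$ constructed in Proposition~\ref{prop:pushforward}(1) for $g\circ f$ to the one obtained by iterating the construction, i.e. $(g\otimes g)\circ\bar\ell$ with $\bar\ell=(f\otimes f)\circ\ell$; but since $(g\otimes g)\circ(f\otimes f)=(g\circ f)\otimes(g\circ f)$ this last compatibility is immediate.

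The main obstacle is bookkeeping rather than conceptual: one must be careful that $\cQ b_H$ is defined on \emph{isomorphism classes} of principal comodule algebras, so strictly speaking the assignment on objects is the singleton class $\{[A]\colon B\subseteq A\text{ principal}\}$-valued functor induced by Proposition~\ref{prop:pushforward}, and the two displayed isomorphisms above need only be exhibited up to the equivalence defining the category; checking that the morphism assignment $\cQ b_H(f)$ respects the equivalence (i.e. sends isomorphic principal extensions to isomorphic ones) is a short functoriality-of-$\bar B\otimes_B(-)$ argument. Once this is in place, the identity and composition laws follow from the two natural isomorphisms above, establishing that $\cQ b_H(-)$ is a covariant functor.
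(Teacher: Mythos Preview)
Your proposal is correct and rests on the same ingredient as the paper's proof, namely Proposition~\ref{prop:pushforward}(1)--(2); the paper's argument is in fact the single sentence ``The first and second statements, along with their proofs in \ref{prop:pushforward}, prove the claim,'' so you have supplied considerably more detail than the author does. Your explicit verification of the identity and composition axioms via the canonical isomorphisms $B\otimes_B A\simeq A$ and $\widetilde B\otimes_{\bar B}(\bar B\otimes_B A)\simeq\widetilde B\otimes_B A$, together with the observation that iterating the strong-connection formula $(f\otimes f)\circ\ell$ is compatible with composition, is exactly what one would write to flesh out the paper's deferral, and your remark about well-definedness on isomorphism classes addresses a point the paper leaves implicit.
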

	\begin{proof}
		The first and second statements, along with their proofs in \ref{prop:pushforward}, prove the claim.
	\end{proof}
	
	We are now ready to prove the functoriality of the Chern-Weil homomorphism for principal comodule algebras. First of all, recall that if $A$ and $A'$ are principal $H$-comodules algebras over $B$ and $g:A\longrightarrow A'$ is a left $B$-linear right $H$-colinear algebra morphism, then $g$ is invertible and restricts to the identity on $B$\cite{schneider1990principal}. Moreover, from the strong connection formula in the proof \ref{prop:pushforward} and \eqref{eq:c_n_base_algebra} we find
	\begin{align*}
		x_n(\ell',h)&=g(\tensor{h}{_{(n+1)}^{\2}}\tensor{h}{_{(1)}^{\1}})\otimes\dots\otimes g(\tensor{h}{_{(n)}^{\2}}\tensor{h}{_{(n+1)}^{\1}})\\
		&=\tensor{h}{_{(n+1)}^{\2}}\tensor{h}{_{(1)}^{\1}}\otimes\dots\otimes\tensor{h}{_{(n)}^{\2}}\tensor{h}{_{(n+1)}^{\1}}\\
		&=x_n(\ell,h)
	\end{align*}
	for any $n\in\bN$ and $h\in H^{tr}$. Thus, isomorphic principal comodule algebras have the same Chern-Weil homomorphisms. For any $h\in H^{tr}$ and any fixed $B$, we can define a map 
	\begin{equation}
		\label{eq:characteristic_class}
		c_B:\cQ b_H(B)\longrightarrow HC_{2n}(B),\quad \left[A\right]\longmapsto\chw_n(\ell_A)(h)
	\end{equation}
	that assigns to the class of $A$ the homology class $\chw_n(\ell_A)(h)$ of \eqref{eq:Chern-Weil_map}. We refer to these maps as \textit{characteristic classes} of principal comodule algebras. For instance if $h=c_{\varphi}\in H^{tr}$ is the cotrace coming form a finite-dimensional corepresentation of $H$ as in \cite{hajac2021cyclic} then $c_B$ is the Chern-Galois character \cite{brzezinski2004chern}. It is interesting to notice that cotraces of Hopf algebras are rare to find in the literature, so it is our goal for future work to find other elements in $H^{tr}$ that allow to define noncommutative characteristic classes.
	
	For the map we just introduced, we have the following
	\begin{prop}
		\label{prop:pushforward_Chern-Weil}
		The map \eqref{eq:characteristic_class} induced by the Chern-Weil homomorphism of principal comodule algebras is a natural transformation from the functor $\cQ b_H(-)$ to the cyclic homology functor $HC(-)$. 
	\end{prop}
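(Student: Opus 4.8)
The plan is to check that, for every morphism $\psi\colon B\longrightarrow\bar B$ of algebras, the square
\[
\begin{tikzcd}
\cQ b_H(B) \arrow[rr, "c_B"] \arrow[dd, "\cQ b_H(\psi)"'] & & HC(B) \arrow[dd, "HC(\psi)"]\\
& & \\
\cQ b_H(\bar B) \arrow[rr, "c_{\bar B}"] & & HC(\bar B)
\end{tikzcd}
\]
commutes, where $c_B$ is the map \eqref{eq:characteristic_class} for fixed $h\in H^{tr}$ and $n\in\bN$, and $HC(\psi)$ restricts on the relevant summand to $HC_{2n}(\psi)$. The argument is a diagram chase resting on one structural feature: the Chern--Weil chain is built from a strong connection using only the multiplication, and strong connections push forward along comodule algebra morphisms as in Proposition \ref{prop:pushforward}.

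First I would fix $[A]\in\cQ b_H(B)$, represented by a principal $H$-comodule algebra $B\subseteq A$ with strong connection $\ell$, and make $\cQ b_H(\psi)$ explicit. By Corollary \ref{cor:quantum_princ_functor}, $\cQ b_H(\psi)([A])$ is the class of a principal $H$-comodule algebra $\bar A$ with coaction-invariant subalgebra $\bar B$, together with a unital right $H$-comodule algebra morphism $f\colon A\longrightarrow\bar A$ whose restriction $f|_B\colon B\to\bar B$ equals $\psi$ (concretely one may take $\bar A=\bar B\otimes_B A$ with $H$ coacting on the right leg and $f(a)=1_{\bar B}\otimes_B a$). By the proof of Proposition \ref{prop:pushforward}(1), $\bar\ell:=(f\otimes f)\circ\ell$ is a strong connection for $\bar B\subseteq\bar A$, so $c_{\bar B}\big(\cQ b_H(\psi)([A])\big)=\chw_n(\bar\ell)(h)$ may be computed with this particular choice.

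Next I would compute the base chains of $\bar\ell$. Writing $\ell(h)=h^{\1}\otimes h^{\2}$ one has $\bar\ell(h)=f(h^{\1})\otimes f(h^{\2})$, so, $f$ being multiplicative, formula \eqref{eq:c_n_base_algebra} gives
\[
x_m(\bar\ell,h)=f\big(\tensor{h}{_{(m+1)}^{\2}}\tensor{h}{_{(1)}^{\1}}\big)\otimes\cdots\otimes f\big(\tensor{h}{_{(m)}^{\2}}\tensor{h}{_{(m+1)}^{\1}}\big)
\]
for all $m\in\bN$ and $h\in H^{tr}$. The crucial point is that, by \eqref{eq:c_n_base_algebra} itself, each leg $\tensor{h}{_{(k+1)}^{\2}}\tensor{h}{_{(k)}^{\1}}$ already lies in $B$, so $f$ acts on it through $f|_B=\psi$, whence $x_m(\bar\ell,h)=\psi^{\otimes(m+1)}\big(x_m(\ell,h)\big)$. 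Invoking then the functoriality of cyclic homology --- the chain map $\psi^{\otimes\bullet}\colon C_\bullet(B)\to C_\bullet(\bar B)$ commutes with the face operators $d_i$ and with the cyclic action \eqref{eq:cyclic_action}, hence descends to the Connes complex and induces $HC(\psi)$ by $[z]\mapsto[\psi^{\otimes\bullet}z]$ --- and recalling Definition \ref{defn:noncommutative_Chern-Weil}, I would conclude
\[
c_{\bar B}\big(\cQ b_H(\psi)([A])\big)=\chw_n(\bar\ell)(h)=\Big[\psi^{\otimes\bullet}\sum_{i=0}^{2n}(-1)^{\lfloor i/2\rfloor}\tfrac{i!}{\lfloor i/2\rfloor!}\,x_i(\ell,h)\Big]=HC(\psi)\big(c_B([A])\big),
\]
which is the asserted commutativity; that $c_B$ is well defined on isomorphism classes is the observation recorded just before the statement, namely that isomorphic principal comodule algebras over a fixed base yield the same chains $x_m$.

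I do not expect a genuinely difficult step, the proof being a chain of unwindings. The one point that needs care, rather than being a bare appeal to multiplicativity of $f$, is the identity $x_m(\bar\ell,h)=\psi^{\otimes(m+1)}(x_m(\ell,h))$: it combines multiplicativity of $f$ with the nontrivial input from \cite{hajac2021cyclic} that every leg of the Chern--Weil chain lands in the base algebra, and with the identification $f|_B=\psi$ from the first step. One should also record that $c_{\bar B}$ evaluated on $\cQ b_H(\psi)([A])$ may legitimately be computed with the transported strong connection $\bar\ell$.
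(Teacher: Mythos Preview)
Your proposal is correct and follows essentially the same line as the paper: push the strong connection forward via $\bar\ell=(f\otimes f)\circ\ell$ (Proposition~\ref{prop:pushforward}), use multiplicativity of $f$ to get $x_m(\bar\ell,h)=f^{\otimes(m+1)}(x_m(\ell,h))$, and conclude by functoriality of cyclic homology. Your write-up is slightly more explicit than the paper's --- you spell out that each leg lies in $B$ so that $f$ acts through $f|_B=\psi$, and you record why $\psi^{\otimes\bullet}$ descends to $HC$ --- but the argument is the same. (Minor slip: the generic leg is $\tensor{h}{_{(k-1)}^{\2}}\tensor{h}{_{(k)}^{\1}}$, not $\tensor{h}{_{(k+1)}^{\2}}\tensor{h}{_{(k)}^{\1}}$.)
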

	\begin{proof}
		We have to prove that the following diagram 
		\begin{equation*}
			\begin{tikzcd}
				\cQ b_H(B) \arrow[rr, "\cQ b_{H}(f)"] \arrow[dd, "c_{B}"'] & & \cQ b_H(\bar{B}) \arrow[dd, "c_{\bar{B}}"]\\
				& & \\
				HC_{2n}(B) \arrow[rr, "HC_{2n}(f)"] & & HC_{2n}(\bar{B}) 
			\end{tikzcd}
		\end{equation*}
		is commutative for any $B$, $\bar{B}$ and $f:A\longrightarrow \bar{A}$ as in  \ref{prop:pushforward}.
		We know from \ref{prop:pushforward} the form of $\bar{\ell}$ and that $c_{\bar{B}}$ is determined by the element  $x_n(\bar{\ell},h)$ for $h\in H^{tr}$ in \eqref{eq:c_n_base_algebra}, so 
		\begin{equation*}
			\begin{split}
				x_n(\bar{\ell},h)&=f(\tensor{h}{_{(n+1)}^{\2}})f(\tensor{h}{_{(1)}^{\1}})\otimes\dots\otimes f(\tensor{h}{_{(n)}^{\2}})f(\tensor{h}{_{(n+1)}^{\1}})\\
				&=f(\tensor{h}{_{(n+1)}^{\2}}\tensor{h}{_{(1)}^{\1}})\otimes\dots\otimes f(\tensor{h}{_{(n)}^{\2}}\tensor{h}{_{(n+1)}^{\1}})=HC(f)(x_n(\ell,h)),
			\end{split}
		\end{equation*}
		for any $n\in\bN$. We can then conclude that $HC(f)\circ c_B=c_{\bar{B}}\circ\cQ b_H(f)$.
	\end{proof}
	
	For a $H$-cleft extension $B\subseteq A$ with cleaving map $\phi$, the map $c_B:\cQ b_H(B)\longrightarrow HC_{2n}(B)$ is determined by $x_n(\ell,h)$ where $\ell$ is the strong connection in \eqref{eq:strong_conn_cleft}. It is easy to check that
	\begin{equation}
		\label{eq:Chern-Weil_cleft}
		x_n(\ell,h)=\epsilon(h) \underbrace{1\otimes\dots\otimes 1}_{n+1},\quad \forall h\in H^{tr}.
	\end{equation}
	Thus, the $n^{th}$-Chern-Weil map of \eqref{eq:Chern-Weil_map} is the expression above multiplied by the constants $C_n=\sum_{i=0}^{2n}(-1)^{\lfloor\frac{i}{2}\rfloor}\frac{i!}{\lfloor\frac{i}{2}\rfloor!}$. Thus, putting together the third statement of \ref{prop:pushforward} and Proposition \ref{prop:pushforward_Chern-Weil}, for any $H$-comodule algebra morphism $f:A\longrightarrow\bar{A}$, with $B\subseteq A$ $H$-cleft as above, we have that
	\begin{equation*}
		\chw_n(\bar{\ell},h)=\epsilon(h)C_n\underbrace{1\otimes\dots\otimes1}_{n+1},\quad \forall h\in H^{tr}.
	\end{equation*}
	
	\section{Chern-Weil homomorphism for deformed extensions}
	In this section, we study the behavior of the Chern-Weil homomorphism under $2$-cocycle deformations of principal comodule algebras. We show that the homomorphism is not affected by a $2$-cocycle deformation coming from the structure Hopf algebra of the extension, so in this case it is a deformation invariant; meanwhile, if the cocycle belongs to an external symmetry, it chances accordingly to the formula \eqref{eq:Chern-Weil_map_deformed_ext}. Finally we analyze the combined case which easily follows from the previous ones.
	
	\subsection{Deformation of the structure Hopf algebra} 
	Now let $B\subseteq A$ be a principal $H$-comodule algebra and $\gamma:H\otimes H\longrightarrow\mathbb{K}$ an invertible $2$-cocycle of $H$. Since the deformation $H_{\gamma}$ involves only the product and not the coproduct, we have
	\begin{equation}
		\label{eq:sim_cotraces}
		H_{\gamma}^{tr}\simeq H^{tr},
	\end{equation}
	Moreover, The spaces $M=A\square^H A$ and $M_{\gamma}=A_{\gamma}\square^{H_{\gamma}}A_{\gamma}$ are isomorphic as vector space.
	However, $M$ and $M_{\gamma}$ have different algebra structures. The multiplication on $M_{\gamma}$ is given by the formula
	\begin{equation}
		\label{eq:prod_M_2-co_right}
		m\cdot_{\gamma}m':=\underline{\epsilon}_{\gamma}(m)m',
	\end{equation}
	Explicitly, if we take $a\otimes^{\gamma}\widetilde{a},a'\otimes^{\gamma}\widetilde{a}'\in M_{\gamma}$ we have
	\begin{equation*}
		(a\otimes^{\gamma}\widetilde{a})\cdot_{\gamma}(a'\otimes^{\gamma}\widetilde{a}')=(a\cdot_{\gamma}\widetilde{a})\cdot_{\gamma}a'\otimes^{\gamma}\widetilde{a}'
	\end{equation*}
	where now $\cdot_{\gamma}$ is the multiplication \eqref{eq:deformed_prod_right_algebra}.
	
	Because the subalgebra $B$ does not change under the deformation via a $2$-cocycle $\gamma$ of the structure Hopf algebra ($B_{\gamma}=B$), we have that the cyclic homologies are the same $HC_*(B_\gamma)=HC_*(B)$. Thus, the following diagram is commutative
	\begin{equation*}
		\begin{tikzcd}
			H^{tr}_{\gamma} \arrow[rr, "\chw_n(\ell_{\gamma})"] \arrow[dd, "\simeq"'] & & HC_{2n}(B_{\gamma}) \arrow[dd, "\simeq"]\\
			& & \\
			H^{tr} \arrow[rr, "\chw_n(l)"] & & HC_{2n}(B).
		\end{tikzcd}
	\end{equation*}
	From this we get $\chw_n(\ell_{\gamma})=\chw_n(l)$, where $\ell_{\gamma}$ is the strong connection of \eqref{eq:strong_conn_def_right}.
	
	We now prove this equivalence explicitly by showing how there are cancellations of the $2$-cocycle in the formulas.
	\begin{prop}
		\label{prop:Chern-Weil_deform_right}
		For any $n\in\mathbb{N}$ the map 
		\begin{equation*}
			\begin{split}
				c_n(\ell_{\gamma}):H^{tr}_{\gamma}\longrightarrow M_{\gamma}^{\otimes(n+1)},\\
				h\longmapsto \tensor{h}{_{(2n+2)}^{\2}}\otimes^{\gamma}\tensor{h}{_{(2)}^{\1}}\otimes^{\gamma}\dots\otimes^{\gamma}\tensor{h}{_{(2n)}^{\2}}\otimes^{\gamma}&\tensor{h}{_{(2n+2)}^{\1}}u_{\gamma}(h_{(1)})u_{\gamma}(h_{(3)})\dots u_{\gamma}(h_{(2n+1)}),
			\end{split} 
		\end{equation*}
		is well-defined and its image lies in the cyclic-symmetric part of $M^{\otimes(n+1)}_{\gamma}$. Moreover for any face operator $d_i$ with $i=0,\dots,n$ one has
		\begin{equation*}
			d_ic_n(\ell_{\gamma})=c_{n-1}(\ell_{\gamma}).
		\end{equation*}
	\end{prop}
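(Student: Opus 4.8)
The map $c_n(\ell_{\gamma})$ is the instance of the general construction \eqref{eq:c_n_1} attached to the strong connection $\ell_{\gamma}$ of the principal $H_{\gamma}$-comodule algebra $B\subseteq A_{\gamma}$, which exists by Theorem~\ref{thm:deformed_strong_conn_right}; by definition it sends $h$ to $\ell_{\gamma}(h_{(1)})\otimes^{\gamma}\cdots\otimes^{\gamma}\ell_{\gamma}(h_{(n+1)})$, with $\ell_{\gamma}$ given by \eqref{eq:strong_conn_def_right}. Since $u_{\gamma}\colon H\to\bK$ is scalar-valued, substituting the formula for $\ell_{\gamma}$ and applying coassociativity of $\Delta$ gives the ``diagonal'' expression
\[
\bigl(\tensor{h}{_{(2)}^{\1}}\otimes^{\gamma}\tensor{h}{_{(2)}^{\2}}\bigr)\otimes^{\gamma}\cdots\otimes^{\gamma}\bigl(\tensor{h}{_{(2n+2)}^{\1}}\otimes^{\gamma}\tensor{h}{_{(2n+2)}^{\2}}\bigr)\,u_{\gamma}(h_{(1)})u_{\gamma}(h_{(3)})\cdots u_{\gamma}(h_{(2n+1)}).
\]
By the same reasoning as for \eqref{eq:c_n_1} in \cite{hajac2021cyclic}, the strong-connection identities for $\ell_{\gamma}$ together with the cotrace condition provide the balancing needed for this to lie in $M_{\gamma}^{\otimes(n+1)}$, and the identification \eqref{eq:sim_cotraces} is what lets us take the domain to be $H^{tr}_{\gamma}$; the scalar factors $u_{\gamma}(h_{(\mathrm{odd})})$ play no role at this stage.

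To reach the closed form and the cyclic symmetry I would use that $M_{\gamma}$ and $M$ are isomorphic as vector spaces and that the reorganisation which in \cite{hajac2021cyclic} turns \eqref{eq:c_n_1} into the cyclic-symmetric form \eqref{eq:c_n_2} relies only on coassociativity, the strong-connection identities \eqref{eq:strong_conn_Sw1}--\eqref{eq:strong_conn_Sw2} and the cotrace condition $h_{(1)}\otimes h_{(2)}=h_{(2)}\otimes h_{(1)}$; none of these is affected by $\gamma$, and the scalars $u_{\gamma}(h_{(\mathrm{odd})})$ are carried along unchanged. This produces
\[
c_n(\ell_{\gamma})(h)=\tensor{h}{_{(2n+2)}^{\2}}\otimes^{\gamma}\tensor{h}{_{(2)}^{\1}}\otimes^{\gamma}\cdots\otimes^{\gamma}\tensor{h}{_{(2n)}^{\2}}\otimes^{\gamma}\tensor{h}{_{(2n+2)}^{\1}}\,u_{\gamma}(h_{(1)})u_{\gamma}(h_{(3)})\cdots u_{\gamma}(h_{(2n+1)}),
\]
and since the cyclic operator $t_n$ of \eqref{eq:cyclic_action} acts on $M_{\gamma}^{\otimes(n+1)}$ by the same formula as on $M^{\otimes(n+1)}$, the permutation argument of \cite{hajac2021cyclic} gives $t_n\,c_n(\ell_{\gamma})(h)=c_n(\ell_{\gamma})(h)$, i.e.\ the image lies in the cyclic-symmetric part.

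For the face operators more care is required, because $d_0,\dots,d_n$ on $M_{\gamma}^{\otimes(n+1)}$ are built from the deformed multiplication \eqref{eq:prod_M_2-co_right}, $m\cdot_{\gamma}m'=\underline{\epsilon}_{\gamma}(m)\,m'$ with $\underline{\epsilon}_{\gamma}(a\otimes^{\gamma}\tilde a)=a\cdot_{\gamma}\tilde a$ and $\cdot_{\gamma}$ as in \eqref{eq:deformed_prod_right_algebra}, so that merging two adjacent slots produces an extra factor $\gamma^{-1}$ of $H$-coactions. The plan is to compute $d_0\,c_n(\ell_{\gamma})(h)$ directly from the closed form: applying $\underline{\epsilon}_{\gamma}$ to the $M_{\gamma}$-product of its first two slots yields $\tensor{h}{_{(2n+2)}^{\2}}\cdot_{\gamma}\tensor{h}{_{(2)}^{\1}}$ times the remaining slots, and here the $\gamma^{-1}$ coming from $\cdot_{\gamma}$ --- after the $H$-coactions of $\tensor{h}{_{(2n+2)}^{\2}}$ and $\tensor{h}{_{(2)}^{\1}}$ are rewritten via \eqref{eq:strong_conn_Sw1} and \eqref{eq:strong_conn_Sw2} and the cotrace property is used to cyclically align the Sweedler legs --- must cancel against the $\gamma$ hidden inside one of the $u_{\gamma}$-factors (recall $u_{\gamma}(h)=\gamma(h_{(1)},S(h_{(2)}))$), the surviving scalars recombining through the convolution identity $u_{\gamma}\ast u_{\gamma}^{-1}=\epsilon$ already used in the proof of Theorem~\ref{thm:deformed_strong_conn_right} into the product $u_{\gamma}(h_{(1)})u_{\gamma}(h_{(3)})\cdots$ with exactly one fewer factor, while the even Sweedler indices telescope from $2n+2$ down to $2n$. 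Carrying out this single ``merge two adjacent slots'' computation, and checking that the bookkeeping of the surviving $u_{\gamma}(h_{(\mathrm{odd})})$ factors and of the cyclically shifted even indices lands precisely on $c_{n-1}(\ell_{\gamma})(h)$, is what I expect to be the main obstacle. Once it is in place, the general case is formal: using that $c_n(\ell_{\gamma})(h)$ and $c_{n-1}(\ell_{\gamma})(h)$ are cyclic-symmetric, together with the simplicial identities relating $d_i$ to $d_0$ and $t_n$, reduces $d_i\,c_n(\ell_{\gamma})(h)=c_{n-1}(\ell_{\gamma})(h)$ to the case $i=0$, exactly as in \cite{hajac2021cyclic}, completing the proof.
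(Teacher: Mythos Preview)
Your proposal is correct, and for the first two claims (well-definedness and cyclic symmetry) your argument is in fact cleaner than the paper's: since $\ell_{\gamma}$ is already known to be a strong connection on the principal $H_{\gamma}$-comodule algebra $B\subseteq A_{\gamma}$ by Theorem~\ref{thm:deformed_strong_conn_right}, and since $H_{\gamma}^{tr}\simeq H^{tr}$, the general results of \cite{hajac2021cyclic} (Lemmas~4.3--4.6) apply verbatim, and substituting \eqref{eq:strong_conn_def_right} gives the stated closed form. The paper instead reverifies these facts by an explicit computation for $n=1$, applying $\rho^{\otimes^{\gamma}}\otimes\id$ and $\id\otimes\rho^{\otimes^{\gamma}}$ and tracking how the deformed antipode $S_{\gamma}$ and the convolution inverse $u_{\gamma}^{-1}$ arrange the cancellations; this is done deliberately, to exhibit the cocycle cancellations rather than to supply an independent argument.

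For the face operator you make things harder than necessary. Your plan is to expand $\tensor{h}{_{(2n+2)}^{\2}}\cdot_{\gamma}\tensor{h}{_{(2)}^{\1}}$ via \eqref{eq:deformed_prod_right_algebra}, rewrite the $H$-coactions using \eqref{eq:strong_conn_Sw1}--\eqref{eq:strong_conn_Sw2}, and then match the resulting $\gamma^{-1}$ against one of the $u_{\gamma}$ factors. The paper avoids all of this: after $d_0$ one has $(\tensor{h}{_{(2n+2)}^{\2}}\cdot_{\gamma}\tensor{h}{_{(2)}^{\1}})\cdot_{\gamma}\tensor{h}{_{(2)}^{\2}}$ in the first $A_{\gamma}$-slot, and by associativity of $\cdot_{\gamma}$ this equals $\tensor{h}{_{(2n+2)}^{\2}}\cdot_{\gamma}\bigl(\tensor{h}{_{(2)}^{\1}}\cdot_{\gamma}\tensor{h}{_{(2)}^{\2}}\bigr)$. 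Now bring in the factor $u_{\gamma}(h_{(1)})$ and observe that $\tensor{h}{_{(2)}^{\1}}\cdot_{\gamma}\tensor{h}{_{(2)}^{\2}}\,u_{\gamma}(h_{(1)})$ is precisely $\underline{\epsilon}_{\gamma}\bigl(\ell_{\gamma}(h_{(\text{first two legs})})\bigr)$, which equals $\epsilon(\,\cdot\,)1_{A}$ by the third strong-connection identity already established in Theorem~\ref{thm:deformed_strong_conn_right}. Two Sweedler legs collapse, the remaining indices renumber, and one lands on $c_{n-1}(\ell_{\gamma})(h)$ with no explicit $\gamma$-bookkeeping. Your reduction of $d_i$ to $d_0$ via cyclic symmetry matches the paper's.
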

	\begin{proof}
		We are looking at $M^{\otimes(n+1)}_{\gamma}$ as a circular tensor product, so that for $n=1$ we have that
		\begin{equation*}
			c_1(\ell_{\gamma})(h)=\tensor{h}{_{(4)}^{\2}}\otimes^{\gamma}\tensor{h}{_{(2)}^{\1}}\otimes^{\gamma}\tensor{h}{_{(2)}^{\2}}\otimes^{\gamma}\tensor{h}{_{(4)}^{\1}}u_{\gamma}(h_{(1)})u_{\gamma}(h_{(3)}).
		\end{equation*}
		To check that this element lies in $M^{\otimes 2}_{\gamma}$ we apply first $\rho^{\otimes^{\gamma}}\otimes\mathrm{id}$ and $\mathrm{id}\otimes\rho^{\otimes^{\gamma}}$
		\begin{equation*}
			\begin{split}
				&(\rho^{\otimes^{\gamma}}\otimes^{\gamma}\mathrm{id})(c_1(\ell_{\gamma})(h))=\\
				&=\tensor{h}{_{(4)}^{\2}_{(0)}}\otimes^{\gamma}\tensor{h}{_{(2)}^{\1}_{(0)}}\otimes\tensor{h}{_{(4)}^{\2}_{(1)}}\cdot_{\gamma}\tensor{h}{_{(2)}^{\1}_{(1)}}\otimes\tensor{h}{_{(2)}^{\2}}\otimes^{\gamma}\tensor{h}{_{(4)}^{\1}}u_{\gamma}(h_{(1)})u_{\gamma}(h_{(3)})\\
				&=\tensor{h}{_{(5)}^{\2}}\otimes^{\gamma}\tensor{h}{_{(3)}^{\1}}\otimes\tensor{h}{_{(6)}}\cdot_{\gamma}
				S(h_{(2)})\otimes\tensor{h}{_{(3)}^{\2}}\otimes^{\gamma}\tensor{h}{_{(5)}^{\1}}u_{\gamma}(h_{(1)})u_{\gamma}(h_{(4)})\\
				&=\tensor{h}{_{(5)}^{\2}}\otimes^{\gamma}\tensor{h}{_{(3)}^{\1}}\otimes\tensor{h}{_{(6)}}\cdot_{\gamma}
				S_{\gamma}(h_{(3)})\otimes\tensor{h}{_{(3)}^{\2}}\otimes^{\gamma}\tensor{h}{_{(5)}^{\1}}u_{\gamma}(h_{(1)})u^{-1}_{\gamma}(h_{(2)})u_{\gamma}(h_{(4)})u_{\gamma}(u_{(5)})\\
				&=\tensor{h}{_{(5)}^{\2}}\otimes^{\gamma}\tensor{h}{_{(3)}^{\1}}\otimes\tensor{h}{_{(6)}}\cdot_{\gamma}
				S_{\gamma}(h_{(1)})\otimes\tensor{h}{_{(3)}^{\2}}\otimes^{\gamma}\tensor{h}{_{(5)}^{\1}}u_{\gamma}(h_{(2)})u_{\gamma}(h_{(4)})\\
				&=\tensor{h}{_{(4)}^{\2}}\otimes^{\gamma}\tensor{h}{_{(2)}^{\1}}\otimes1_{H_{\gamma}}\otimes\tensor{h}{_{(2)}^{\2}}\otimes^{\gamma}\tensor{h}{_{(4)}^{\1}}u_{\gamma}(h_{(1)})u_{\gamma}(h_{(3)})
			\end{split}
		\end{equation*}
		where in the second equality we used the properties of the translation map, in the third one the definition of $S_{\gamma}$, then the fact that $u_{\gamma}$ is convolution invertible, in the second to last equality we used the identification $H^{tr}_{\gamma}\simeq H^{tr}$ and Lemma something for $H^{tr}$ and finally the definition of the antipode.
		
		\begin{equation*}
			\begin{split}
				&(\mathrm{id}\otimes^{\gamma}\rho^{\otimes^{\gamma}})(c_1(\ell_{\gamma})(h))=\\
				&=\tensor{h}{_{(4)}^{\2}}\otimes^{\gamma}\tensor{h}{_{(2)}^{\1}}\otimes^{\gamma}\tensor{h}{_{(2)}^{\2}_{(0)}}\otimes^{\gamma}\tensor{h}{_{(4)}^{\1}_{(0)}}\otimes\tensor{h}{_{(2)}^{\2}_{(1)}}\cdot_{\gamma}\tensor{h}{_{(4)}^{\1}_{(1)}}u_{\gamma}(h_{(1)})u_{\gamma}(h_{(3)})\\
				&=\tensor{h}{_{(6)}^{\2}}\otimes^{\gamma}\tensor{h}{_{(2)}^{\1}}\otimes^{\gamma}\tensor{h}{_{(2)}^{\2}}\otimes^{\gamma}\tensor{h}{_{(6)}^{\1}}\otimes\tensor{h}{_{(3)}}\cdot_{\gamma}S(h_{(5)})u_{\gamma}(h_{(1)})u_{\gamma}(h_{(4)})\\
				&=\tensor{h}{_{(8)}^{\2}}\otimes^{\gamma}\tensor{h}{_{(2)}^{\1}}\otimes^{\gamma}\tensor{h}{_{(2)}^{\2}}\otimes^{\gamma}\tensor{h}{_{(8)}^{\1}}\otimes\tensor{h}{_{(3)}}\cdot_{\gamma}S_{\gamma}(h_{(6)})u_{\gamma}(h_{(1)})u_{\gamma}(h_{(4)})u^{-1}_{\gamma}(h_{(5)})u_{\gamma}(h_{(7)})\\
				&=\tensor{h}{_{(6)}^{\2}}\otimes^{\gamma}\tensor{h}{_{(2)}^{\1}}\otimes^{\gamma}\tensor{h}{_{(2)}^{\2}}\otimes^{\gamma}\tensor{h}{_{(6)}^{\1}}\otimes\tensor{h}{_{(3)}}\cdot_{\gamma}S_{\gamma}(h_{(4)})u_{\gamma}(h_{(1)})u_{\gamma}(h_{(5)})\\
				&=\tensor{h}{_{(4)}^{\2}}\otimes^{\gamma}\tensor{h}{_{(2)}^{\1}}\otimes^{\gamma}\tensor{h}{_{(2)}^{\2}}\otimes^{\gamma}\tensor{h}{_{(4)}^{\1}}\otimes1_{H_{\gamma}}u_{\gamma}(h_{(1)})u_{\gamma}(h_{(3)}),
			\end{split}
		\end{equation*}
		where, we once again used the properties of the translation map, the definition of $S_{\gamma}$, and the invertibility of $u_{\gamma}$.
		
		For the second part of the statement, we just need to check that the equality holds for $i=0$, and then it follows from induction that it is true for any other $i>0$. For any $h\in H_{\gamma}$, one has
		\begin{equation*}
			\begin{split}
				&d_0(c_n(\ell_{\gamma})(h))=\\
				&=(\tensor{h}{_{(2n+2)}^{\2}}\cdot_{\gamma}\tensor{h}{_{(2)}^{\1}})\cdot_{\gamma}\tensor{h}{_{(2)}^{\2}}\otimes^{\gamma}\dots\otimes^{\gamma}\tensor{h}{_{(2n)}^{\2}}\otimes^{\gamma}\tensor{h}{_{(2n+2)}^{\1}}u_{\gamma}(h_{(1)})\dots u_{\gamma}(h_{(2n+1)})\\
				&=\tensor{h}{_{(2n)}^{\2}}\cdot_{\gamma}(\tensor{h}{_{(2)}^{\1}}\cdot_{\gamma}\tensor{h}{_{(2)}^{\2}}u_{\gamma}(h_{(1)}))\otimes^{\gamma}\dots\otimes^{\gamma}\tensor{h}{_{(2n)}^{\2}}\otimes^{\gamma}\tensor{h}{_{(2n+2)}^{\1}}u_{\gamma}(h_{(3)})\dots u_{\gamma}(h_{(2n+1)})\\
				&=\tensor{h}{_{(2n)}^{\2}}\otimes^{\gamma}\tensor{h}{_{(2)}^{\1}}\otimes^{\gamma}\dots\otimes^{\gamma}\tensor{h}{_{(2n-2)}^{\2}}\otimes^{\gamma}\tensor{h}{_{(2n)}^{\1}}u_{\gamma}(h_{(1)})\dots u_{\gamma}(h_{(2n-1)})=c_{n-1}(\ell_{\gamma})(h),
			\end{split}
		\end{equation*}
		we used the definition of the counit and \ref{thm:deformed_strong_conn_right}.
	\end{proof}

	This lemma allows us to define a map $\widetilde{\mathrm{chw_n}}(\ell_{\gamma}):H^{tr}_{\gamma}\longrightarrow \mathrm{HC_{2n}}(M_{\gamma})$ in the same way as the non-deformed case, equation \eqref{eq:Chern-Weil_map}. Applying the counit $\underline{\epsilon}_{\gamma}^{\otimes(n+1)}$ after the map $c_n(\ell_{\gamma})$ we have the element $(\underline{\epsilon}_{\gamma}^{\otimes^{\gamma}(n+1)}\circ c_n(\ell_{\gamma}))(h)$ with $h\in H^{tr}_{\gamma}$    
	\begin{equation*}
		\begin{split}
			&\tensor{h}{_{(2n+2)}^{\2}}\cdot_{\gamma}\tensor{h}{_{(2n+2)}^{\2}}\otimes^{\gamma}\dots\otimes^{\gamma}\tensor{h}{_{(2n)}^{\2}}\cdot_{\gamma}\tensor{h}{_{(2n+2)}^{\1}}u_{\gamma}(h_{(1)})\dots u_{\gamma}(h_{(2n+1)})\\
			&=\tensor{h}{_{(5n+4)}^{\2}}\tensor{h}{_{(4)}^{\1}}\otimes^{\gamma}\dots\otimes^{\gamma}\tensor{h}{_{(5n-1)}^{\2}}\tensor{h}{_{(5n+4)}^{\1}}\\
			&\gamma^{-1}(h_{(5n+5)}, S(h_{(3)}))\dots\gamma^{-1}(h_{(5n)}, S(h_{(5n+3)}))\gamma(h_{(1)}, S(h_{(2)}))\dots \gamma(h_{(5n+1)}, S(h_{(5n+2)}))\\
			&=\tensor{h}{_{(n+4)}^{\2}}\tensor{h}{_{(4)}^{\1}}\otimes^{\gamma}\dots\otimes^{\gamma}\tensor{h}{_{(n+3)}^{\2}}\tensor{h}{_{(n+4)}^{\1}}\gamma^{-1}(h_{(n+5)}, S(h_{(3)}))\gamma(h_{(1)}, S(h_{(2)}))\\
			&=\tensor{h}{_{(n+3)}^{\2}}\tensor{h}{_{(3)}^{\1}}\otimes^{\gamma}\dots\otimes^{\gamma}\tensor{h}{_{(n+2)}^{\2}}\tensor{h}{_{(n+3)}^{\1}}(\gamma^{-1}*\gamma)(h_{(1)}, S(h_{(2)}))\\
			&=\tensor{h}{_{(n+1)}^{\2}}\tensor{h}{_{(1)}^{\1}}\otimes^{\gamma}\dots\otimes^{\gamma}\tensor{h}{_{(n)}^{\2}}\tensor{h}{_{(n+1)}^{\1}},
		\end{split}
	\end{equation*}
	that belongs to the algebra $B^{\otimes^{\gamma}(n+1)}$ and when we put it in the formula of the Chern-Weil homomorphism we get
	\begin{equation}
		\label{eq:Chern-Weil_hom_equality_right}
		\mathrm{chw}_n(\ell)=\mathrm{chw}_n(\ell_{\gamma}),
	\end{equation}
	In other words, the cyclic homology Chern-Weil homomorphism a is deformation invariant. Due to this phenomenon we have that the result of Proposition \ref{prop:pushforward_Chern-Weil} holds as well in this case, in other words any characteristic class is a deformation invariant for $2$-cocycles coming from the structure Hopf algebra.
	
	\subsection{Deformation from an external symmetry}
	In this case, we consider a $(K,H)$-bicomodule algebra such that $B\subset A$ is a principal $H$-comodule algebra and we take a $2$-cocycle of the external symmetry $\sigma:K\otimes K\longrightarrow\bK$. We saw that both $A$ and $B$ are deformed into $\tensor[_{\sigma}]{A}{}$ and $\tensor[_{\sigma}]{B}{}$, while the structure Hopf algebra $H$ remains the same.
	
	Like in the previous case, we have the linear isomorphism $M=A\square^H A\simeq \tensor[_{\sigma}]{M}{}=\tensor[_{\sigma}]{A}{}\square^H\tensor[_{\sigma}]{A}{}$. The counit on $\tensor[_{\sigma}]{M}{}$ is  
	\beq
	\label{eq:counit_bialgebroid_deformed_sigma}
	\tensor[_{\sigma}]{\underline{\epsilon}}{}:\tensor[_{\sigma}]{M}{}\longrightarrow\tensor[_{\sigma}]{B}{},\quad a\tensor[^{\sigma}]{\otimes}{}\tilde{a}\longmapsto\sigma\left(a_{(-1)}, \tilde{a}_{(-1)}\right)a_{(0)}\tilde{a}_{(0)},
	\eeq 
	thus the algebra structure is given by the multiplication
	\beq
	\label{eq:product_M_sigma}
	m\bullet_{\sigma}m':=\tensor[_{\sigma}]{\underline{\epsilon}}{}(m)m'.
	\eeq 
	We remark that in the latter formula, the product \eqref{eq:deform_prod_left_algebra} is used, so explicitly we have
	\begin{equation*}
		\mu_{\tensor[_{\sigma}]{M}{}}(a\tensor[^{\sigma}]{\otimes}{}\tilde{a}\otimes a'\tensor[^{\sigma}]{\otimes}{}\tilde{a}')=\tensor[_{\sigma}]{\underline{\epsilon}}{}(a\tensor[^{\sigma}]{\otimes}{}\tilde{a})\bullet_{\sigma}a'\tensor[^{\sigma}]{\otimes}{}\tilde{a}',
	\end{equation*}
	where now $\bullet_{\sigma}$ is the multiplication in $\tensor[_{\sigma}]{A}{}$.
	
	For a deformation via a $2$-cocycle $\sigma$ of an external Hopf algebra $K$ we have the following result
	\begin{prop}
		\label{prop:deforma_Chern-Weil_external}
		For any $n\in\mathbb{N}$ the map
		\begin{equation*}
			\begin{split}
				&c_n(\tensor[_{\sigma}]{\ell}{}):H^{tr}\longrightarrow \tensor[_{\sigma}]{M}{^{\otimes(n+1)}},\\
				h\longmapsto\sigma^{-1}(\tensor{h}{_{(1)}^{\1}_{(-1)}}&,\tensor{h}{_{(1)}^{\2}_{(-1)}}) \dots\sigma^{-1}(\tensor{h}{_{(n+1)}^{\1}_{(-1)}},\tensor{h}{_{(n+1)}^{\2}_{(-1)}})\\
				\tensor{h}{_{(n+1)}^{\2}_{(0)}}\tensor[^{\sigma}]{\otimes}{}&\tensor{h}{_{(1)}^{\1}_{(0)}}\tensor[^{\sigma}]{\otimes}{}\dots\tensor[^{\sigma}]{\otimes}{}\tensor{h}{_{(n)}^{\2}_{(0)}}\tensor[^{\sigma}]{\otimes}{}\tensor{h}{_{(n+1)}^{\1}_{(0)}}.    
			\end{split}
		\end{equation*}
		is well-defined and its image lies in the cyclic-symmetric part of $\tensor[_{\sigma}]{M}{^{\otimes(n+1)}}$. Moreover for any face operator $d_i$ with $i=0,\dots,n$ one has
		\begin{equation*}
			d_ic_n(_{\sigma}\ell)=c_{n-1}(_{\sigma}\ell)
		\end{equation*}
	\end{prop}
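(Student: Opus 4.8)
The plan is to follow closely the proof of Proposition \ref{prop:Chern-Weil_deform_right}, with the role played there by the $\gamma$-deformed antipode and the scalar $u_{\gamma}$ now taken over by the left $K$-coaction and the cocycle $\sigma$. First observe that the displayed map is nothing but the result of feeding the strong connection $\tensor[_{\sigma}]{\ell}{}$ of \eqref{eq:strong_conn_left} into the recipe producing the cyclic-symmetric element \eqref{eq:c_n_2}: for each Sweedler component $h_{(k)}$ the two legs of $\tensor[_{\sigma}]{\ell}{}(h_{(k)})$ carry the scalar $\sigma^{-1}(\tensor{h}{_{(k)}^{\1}_{(-1)}},\tensor{h}{_{(k)}^{\2}_{(-1)}})$, and pulling these $n+1$ scalars to the front gives the stated formula. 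Three things then have to be checked: that each slot $\tensor{h}{_{(k)}^{\2}_{(0)}}\tensor[^{\sigma}]{\otimes}{}\tensor{h}{_{(k+1)}^{\1}_{(0)}}$ (indices modulo $n+1$) lies in $\tensor[_{\sigma}]{M}{}=\tensor[_{\sigma}]{A}{}\square^{H}\tensor[_{\sigma}]{A}{}$, so that the whole element sits in $\tensor[_{\sigma}]{M}{^{\otimes(n+1)}}$; that it is cyclic-symmetric; and that $d_ic_n(\tensor[_{\sigma}]{\ell}{})=c_{n-1}(\tensor[_{\sigma}]{\ell}{})$ for every face operator.

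For well-definedness I would, exactly as in Proposition \ref{prop:Chern-Weil_deform_right}, apply the diagonal right $H$-coaction $\rho^{\otimes}$ to each of the $n+1$ pairs of $\tensor[_{\sigma}]{A}{}$-legs forming a slot, one slot at a time, keeping the remaining legs fixed, and verify that the outcome is the original element with $1_{H}$ inserted in the corresponding place; it is enough to treat $n=1$, the general case being identical. The crucial point is that $\rho^{\otimes}$ does not interact with the left $K$-coaction, because $\tensor[_{\sigma}]{A}{}$ is a $(\tensor[_{\sigma}]{K}{},H)$-bicomodule and the two coactions commute by \eqref{eq:bicomodule_eq}, so the $\sigma^{-1}$-scalars are carried through untouched; the rest is the computation already carried out for $c_n(\ell)$ in \cite{hajac2021cyclic}, which uses \eqref{eq:strong_conn_Sw1} and \eqref{eq:strong_conn_Sw2} to rewrite the $H$-coactions of the legs through the translation map, the cotrace condition \eqref{eq:cotarces}, and the antipode axioms to collapse the emerging terms $h_{(j)}S(h_{(j+1)})$.

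Cyclic-symmetry I would obtain by the same mechanism as in \cite{hajac2021cyclic}: the signed cyclic operator $t_n$ rotates the slots of $c_n(\tensor[_{\sigma}]{\ell}{})(h)$, and the hypothesis $h\in H^{tr}$ allows the cyclic relabelling of the Sweedler components $h_{(1)},\dots,h_{(n+1)}$ that brings the rotated tuple back to the original; since the $\sigma^{-1}$-scalars sit symmetrically, one per component $h_{(k)}$, they are relabelled along and produce nothing extra. For the face-operator identity it then suffices, by cyclic-symmetry, to treat $d_0$, the remaining $d_i$ following by the same argument (or by induction on $i$) as in Proposition \ref{prop:Chern-Weil_deform_right}. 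Computing $d_0c_n(\tensor[_{\sigma}]{\ell}{})(h)$ means multiplying the first two slots with the product \eqref{eq:product_M_sigma}, $m\bullet_{\sigma}m'=\tensor[_{\sigma}]{\underline{\epsilon}}{}(m)\,m'$, using the deformed counit \eqref{eq:counit_bialgebroid_deformed_sigma} and the $\bullet_{\sigma}$-action of $\tensor[_{\sigma}]{B}{}$ on $\tensor[_{\sigma}]{M}{}$; the cocycles $\sigma$ thus produced, together with the $\sigma^{-1}$-scalars attached to the merged legs, collapse by convolution invertibility of $\sigma$ and by the normalisation $\sigma(1_{K},-)=\sigma(-,1_{K})=\epsilon$ (see \eqref{eq:cocycle_2}), the latter being triggered once the strong-connection relation $h^{\1}h^{\2}=\epsilon(h)1_{A}$ turns one of the legs into $1_{A}$; a final reindexing then yields $c_{n-1}(\tensor[_{\sigma}]{\ell}{})(h)$. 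One could also obtain these identities more conceptually by transporting them along the monoidal equivalence $\Sigma$, but the explicit route makes the cancellations visible, in the spirit of Proposition \ref{prop:Chern-Weil_deform_right}.

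The step I expect to be the main obstacle is precisely this bookkeeping in the well-definedness check and in the $d_0$ computation: one must keep track simultaneously of the two left $K$-coactions feeding the $\sigma^{\pm1}$-scalars and of the right $H$-coactions together with the strong-connection legs, invoking \eqref{eq:bicomodule_eq} repeatedly to keep the $K$-side and the $H$-side disentangled, so that the $\sigma$-cancellations (convolution invertibility, cocycle normalisation) and the $H$-side cancellations (antipode, translation map, \eqref{eq:strong_conn_Sw1}--\eqref{eq:strong_conn_Sw3}) can be performed independently. This is the exact analogue of the $u_{\gamma}$-juggling in the proof of Proposition \ref{prop:Chern-Weil_deform_right}, only with the scalars now living on the external-symmetry side rather than being manufactured from a deformed antipode.
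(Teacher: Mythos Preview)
Your proposal is correct and follows essentially the same route as the paper's proof: the paper also opens by remarking that the argument runs parallel to Proposition \ref{prop:Chern-Weil_deform_right}, then checks well-definedness by applying $\rho^{\otimes}\tensor[^{\sigma}]{\otimes}{}\mathrm{id}$ and invoking the bicomodule identity \eqref{eq:bicomodule_eq} together with \eqref{eq:strong_conn_Sw1}--\eqref{eq:strong_conn_Sw2} and the cotrace condition, and then handles $d_0$ by associativity of $\bullet_{\sigma}$, convolution invertibility of $\sigma$, and the strong-connection relation from Theorem \ref{thm:strong_conn_left}. If anything, your write-up is more explicit than the paper's, which does not spell out the cyclic-symmetry step or the role of the cocycle normalisation \eqref{eq:cocycle_2}; both are indeed needed exactly where you indicate.
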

	\begin{proof}
		The proof goes in the same way as in \ref{prop:Chern-Weil_deform_right}. Let $\rho^{\otimes}$ be the diagonal coaction of $H$ on $_{\sigma}A$, then
		\begin{equation*}
			\begin{split}
				&(\rho^{\otimes}\tensor[^{\sigma}]{\otimes}{}\mathrm{id})(c_n(\tensor[_{\sigma}]{\ell}{})(h))=\\
				&=\sigma^{-1}\left(\tensor{h}{_{(1)}^{\1}_{(-1)}},\tensor{h}{_{(1)}^{\2}_{(-1)}}\right) \dots\sigma^{-1}\left(\tensor{h}{_{(n+1)}^{\1}_{(-1)}},\tensor{h}{_{(n+1)}^{\2}_{(-1)}}\right)\\
				&\tensor{h}{_{(n+1)}^{\2}_{(0)(0)}}\tensor[^{\sigma}]{\otimes}{}\tensor{h}{_{(1)}^{\1}_{(0)(0)}}\otimes\tensor{h}{_{(n+1)}^{\2}_{(0)(1)}}\tensor{h}{_{(1)}^{\1}_{(0)(1)}}\tensor[^{\sigma}]{\otimes}{}\dots\tensor[^{\sigma}]{\otimes}{}\tensor{h}{_{(n)}^{\2}_{(0)}}\tensor[^{\sigma}]{\otimes}{}\tensor{h}{_{(n+1)}^{\1}_{(0)}}\\
				&=\sigma^{-1}\left(\tensor{h}{_{(1)}^{\1}_{(0)(-1)}},\tensor{h}{_{(1)}^{\2}_{(-1)}}\right) \dots\sigma^{-1}\left(\tensor{h}{_{(n+1)}^{\1}_{(-1)}},\tensor{h}{_{(n+1)}^{\2}_{(0)(-1)}}\right)\\
				&\tensor{h}{_{(n+1)}^{\2}_{(0)(0)}}\tensor[^{\sigma}]{\otimes}{}\tensor{h}{_{(1)}^{\1}_{(0)(0)}}\otimes\tensor{h}{_{(n+1)}^{\2}_{(1)}}\tensor{h}{_{(1)}^{\1}_{(1)}}\otimes\dots\tensor[^{\sigma}]{\otimes}{}\tensor{h}{_{(n)}^{\2}_{(0)}}\tensor[^{\sigma}]{\otimes}{}\tensor{h}{_{(n+1)}^{\1}_{(0)}}\\
				&=\sigma^{-1}\left(\tensor{h}{_{(2)}^{\1}_{(-1)}},\tensor{h}{_{(2)}^{\2}_{(-1)}}\right) \dots\sigma^{-1}\left(\tensor{h}{_{(n+2)}^{\1}_{(-1)}},\tensor{h}{_{(n+2)}^{\2}_{(-1)}}\right)\\
				&\tensor{h}{_{(n+2)}^{\2}_{(0)}}\tensor[^{\sigma}]{\otimes}{}\tensor{h}{_{(2)}^{\1}_{(0)}}\otimes h_{(n+3)}S(h_{(1)})\otimes\dots\tensor[^{\sigma}]{\otimes}{}\tensor{h}{_{(n+1)}^{\2}_{(0)}}\tensor[^{\sigma}]{\otimes}{}\tensor{h}{_{(n+2)}^{\1}_{(0)}}\\
				&=\sigma^{-1}\left(\tensor{h}{_{(1)}^{\1}_{(-1)}},\tensor{h}{_{(1)}^{\2}_{(-1)}}\right) \dots\sigma^{-1}\left(\tensor{h}{_{(n+1)}^{\1}_{(-1)}},\tensor{h}{_{(n+1)}^{\2}_{(-1)}}\right)\\
				&\tensor{h}{_{(n+1)}^{\2}_{(0)}}\tensor[^{\sigma}]{\otimes}{}\tensor{h}{_{(1 )}^{\1}_{(0)}}\otimes h_{(n+2)}S(h_{(n+3)})\otimes\dots\tensor[^{\sigma}]{\otimes}{}\tensor{h}{_{(n)}^{\2}_{(0)}}\tensor[^{\sigma}]{\otimes}{}\tensor{h}{_{(n+1)}^{\1}_{(0)}}\\
				&=\sigma^{-1}\left(\tensor{h}{_{(1)}^{\1}_{(-1)}},\tensor{h}{_{(1)}^{\2}_{(-1)}}\right) \dots\sigma^{-1}\left(\tensor{h}{_{(n+1)}^{\1}_{(-1)}},\tensor{h}{_{(n+1)}^{\2}_{(-1)}}\right)\\
				&\tensor{h}{_{(n+1)}^{\2}_{(0)}}\tensor[^{\sigma}]{\otimes}{}\tensor{h}{_{(1)}^{\1}_{(0)}}\otimes 1_H\otimes\dots\tensor[^{\sigma}]{\otimes}{}\tensor{h}{_{(n)}^{\2}_{(0)}}\tensor[^{\sigma}]{\otimes}{}\tensor{h}{_{(n+1)}^{\1}_{(0)}},
			\end{split}
		\end{equation*}
		where we used the fact that $\tensor[_{\sigma}]{A}{}$ is a $K$-$H$-bicomodule, the properties of the strong connection and the cyclic property in $H^{tr}$.
		
		Now, if we apply the face operator $d_0$, what we get is
		\begin{equation*}
			\begin{split}
				&d_0(c_n(\tensor[_{\sigma}]{\ell}{})(h))=\\
				&=\sigma^{-1}\left(\tensor{h}{_{(1)}^{\1}_{(-1)}},\tensor{h}{_{(1)}^{\2}_{(-1)}}\right) \dots\sigma^{-1}\left(\tensor{h}{_{(n+1)}^{\1}_{(-1)}},\tensor{h}{_{(n+1)}^{\2}_{(-1)}}\right)\\
				&(\tensor{h}{_{(n+1)}^{\2}_{(0)}}\bullet_{\sigma}\tensor{h}{_{(1)}^{\1}_{(0)}})\bullet_{\sigma}\tensor{h}{_{(2)}^{\2}_{(0)}}\tensor[^{\sigma}]{\otimes}{}\dots\tensor[^{\sigma}]{\otimes}{}\tensor{h}{_{(n)}^{\2}_{(0)}}\tensor[^{\sigma}]{\otimes}{}\tensor{h}{_{(n+1)}^{\1}_{(0)}}\\
				&=\sigma^{-1}\left(\tensor{h}{_{(2)}^{\1}_{(-1)}},\tensor{h}{_{(2)}^{\2}_{(-1)}}\right)\dots\sigma^{-1}\left(\tensor{h}{_{(n+1)}^{\1}_{(-1)}},\tensor{h}{_{(n+1)}^{\2}_{(-1)}}\right)\\
				&\tensor{h}{_{(n+1)}^{\2}_{(0)}}\bullet_{\sigma}\left(\sigma^{-1}\left(\tensor{h}{_{(1)}^{\1}_{(-1)}},\tensor{h}{_{(1)}^{\2}_{(-1)}}\right)\tensor{h}{_{(1)}^{\1}}\bullet_{\sigma}\tensor{h}{_{(1)}^{\2}}\right)\tensor[^{\sigma}]{\otimes}{}\dots\tensor[^{\sigma}]{\otimes}{}\tensor{h}{_{(n)}^{\2}_{(0)}}\tensor[^{\sigma}]{\otimes}{}\tensor{h}{_{(n+1)}^{\1}_{(0)}}\\
				&=\sigma^{-1}\left(\tensor{h}{_{(1)}^{\1}_{(-1)}},\tensor{h}{_{(1)}^{\2}_{(-1)}}\right)\dots\sigma^{-1}\left(\tensor{h}{_{(n)}^{\1}_{(-1)}},\tensor{h}{_{(n)}^{\2}_{(-1)}}\right)\\
				&\tensor{h}{_{(n)}^{\2}_{(0)}}\tensor[^{\sigma}]{\otimes}{}\tensor{h}{_{(1)}^{\1}_{(0)}}\tensor[^{\sigma}]{\otimes}{}\dots\tensor[^{\sigma}]{\otimes}{}\tensor{h}{_{(n-1)}^{\2}_{(0)}}\tensor[^{\sigma}]{\otimes}{}\tensor{h}{_{(n)}^{\1}_{(0)}}=c_{n-1}(\tensor[_{\sigma}]{\ell}{})(h),
			\end{split}
		\end{equation*}
		where we used the associativity of the product $\bullet_{\sigma}$ and \ref{thm:strong_conn_left}
	\end{proof}
	
	The counit $\tensor[_{\sigma}]{\underline{\epsilon}}{}$, together with map $\widetilde{\mathrm{chw}}_n(\tensor[_{\sigma}]{\ell}{}):H^{tr}\longrightarrow\mathrm{HC}_{2n}(\tensor[_{\sigma}]{M}{})$, induces a map $\mathrm{chw}_n(\tensor[_{\sigma}]{\ell}{}):H^{tr}\longrightarrow\mathrm{HC}_{2n}(\tensor[_{\sigma}]{B}{})$ valued in the cyclic homology of the base algebra $\tensor[_{\sigma}]{B}{}$ that is defined starting with $x_n(\tensor[_{\sigma}]{\ell}{},h):=(\tensor[_{\sigma}]{\underline{\epsilon}}{}^{\tensor[_{\sigma}]{\otimes}{}(n+1)}\circ c_n(\tensor[_{\sigma}]{\ell}{}))(h)$
	\begin{equation}
		\label{eq:Chern-Weil_map_deformed_left}
		\begin{split}
			x_n(\tensor[_{\sigma}]{\ell}{},h)=&\sigma^{-1}\left(\tensor{h}{_{(1)}^{\1}_{(-2)}},\tensor{h}{_{(1)}^{\2}_{(-2)}}\right)\dots\sigma^{-1}\left(\tensor{h}{_{(n+1)}^{\1}_{(-2)}},\tensor{h}{_{(n+1)}^{\2}_{(-2)}}\right)\\
			&\sigma\left(\tensor{h}{_{(n+1)}^{\2}_{(-1)}},\tensor{h}{_{(1)}^{\1}_{(-1)}}\right)\dots\sigma\left(\tensor{h}{_{(n)}^{\2}_{(-1)}},\tensor{h}{_{(n+1)}^{\1}_{(-1)}}\right)\\
			&\tensor{h}{_{(n+1)}^{\2}_{(0)}}\tensor{h}{_{(1)}^{\1}_{(0)}}\tensor[^{\sigma}]{\otimes}{}\dots\tensor[^{\sigma}]{\otimes}{}\tensor{h}{_{(n)}^{\2}_{(0)}}\tensor{h}{_{(n+1)}^{\1}_{(0)}}.
		\end{split}
	\end{equation}
	So we have the formula
	\begin{equation}
		\label{eq:Chern-Weil_map_deformed_ext}
		\chw_n(\tensor[_{\sigma}]{\ell}{})(h)=\sum_{i=0}^{2n}(-1)^{\lfloor\frac{i}{2}\rfloor}\frac{i!}{\lfloor\frac{i}{2}\rfloor!}x_i(\tensor[_{\sigma}]{\ell}{},h),\quad h\in H^{tr}.
	\end{equation}
	
	We notice in this case that there is no cancellation of the cocycle $\sigma$ in the final formula. Thus, under a deformation of this type, we have the Chern-Weil homomorphism changes. 
	
	\subsection{Deformations combined}
	If we now take $2$-cocycles, namely $\gamma:H\otimes H\longrightarrow\bK$ and $\sigma:K\otimes K\longrightarrow\bK$ and perform a combined deformation of a $(K,H)$-bicomoule algebra $A$ such that $B\subseteq A$ is a principal $H$-comodule algebra, we saw that $\tensor[_{\sigma}]{B}{}\subseteq\tensor[_{\sigma}]{A}{_{\gamma}}$ is a principal $\tensor[]{H}{_{\gamma}}$-comodule algebra with external symmetry $\tensor[_{\sigma}]{K}{}$ and with strong connection $\tensor[_{\sigma}]{l}{_{\gamma}}$ of \eqref{eq:strong_conn_both_deformed}. For what we studied in this section, we find, by performing the same type of computations in \ref{prop:Chern-Weil_deform_right} and \ref{prop:deforma_Chern-Weil_external}, that the Chern-Weil homomorphism is subjected to the change only by the $2$-cocycle of the external symmetry $K$, i.e.
	\begin{equation*}
		\chw_n(\tensor[_{\sigma}]{\ell}{_{\gamma}})=\chw_n((\tensor[_{\sigma}]{\ell}{})_{\gamma})=\chw_n(\tensor[_{\sigma}]{\ell}{}),\quad n\in\bN.
	\end{equation*}
	

	\subsection{Naturality and deformations}
	If we consider a deformation via a $2$-cocycle of the structure algebra $H$, any right $H$-comodule algebra morphism $f:A\longrightarrow\bar{A}$ satisfies
	\begin{equation}
		\label{eq:algebra_map_def}
		\begin{split}
			f(a)\cdot_{\gamma} f(a')&=f(a)_{(0)}f(a')_{(0)}\gamma^{-1}(f(a)_{(1)},f(a')_{(1)})\\
			&=f(a_{(0)})f(a'_{(0)})\gamma^{-1}(a_{(1)},a'_{(1)})\\
			&=f(a_{(0)}a'_{(0)})\gamma^{-1}(a_{(1)},a'_{(1)}))=f(a\cdot_{\gamma} a'), \quad a,a'\in A_{\gamma},
		\end{split}
	\end{equation}
	it is then a $H_{\gamma}$-comodule algebra morphism between the deformaed algebras $A_{\gamma}$ and $\bar{A}_{\gamma}$. For this reason, we also have the identification of principal comodule algebra categories $\cQ b_{H_{\gamma}}(B)\simeq\cQ b_H(B)$. Consequently, the claim in Proposition \ref{prop:pushforward_Chern-Weil} and the discussion on the cleft case after it still hold in this case.
	
	For a deformation via a $2$-cocycle $\sigma$ of an external symmetry $K$ of a principal $H$-comodule algebra, to have the same result, it is sufficient to ask that $f:A\longrightarrow\bar{A}$ is a $(K,H)$-bicomodule algebra morphism. In this way, we have an induced map from $\tensor[_{\sigma}]{A}{}$ to $\tensor[_{\sigma}]{\bar{A}}{}$ because of \eqref{eq:algebra_map_def}. If we denote by $\tensor[^{K}]{\cQ b}{_{H}}(B)$ the category of isomorphism classes of principal $H$-comodule algebras with external symmetry $K$, we have the following
	\begin{prop}
		\label{prop:deformed_pullback}
		The deformation via $\sigma$ of the map \eqref{eq:characteristic_class} $c_{\tensor[_{\sigma}]{B}{}}:\tensor[^{\tensor[_{\sigma}]{K}{}}]{\cQ b}{_{H}}(\tensor[_{\sigma}]{B}{})\longrightarrow HC(\tensor[_{\sigma}]{B}{})$ induced by the deformed Chern-Weil homomorphism of \eqref{eq:Chern-Weil_map_deformed_left} is a natural transformation between the functors $\tensor[^{\tensor[_{\sigma}]{K}{}}]{\cQ b}{_{H}}(-)$ and $HC(-)$.
	\end{prop}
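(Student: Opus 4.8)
The plan is to deduce this from the naturality already established in Proposition \ref{prop:pushforward_Chern-Weil} by checking that the pushforward construction of Proposition \ref{prop:pushforward} is compatible with $\sigma$-deformation. First I would fix a unital $(K,H)$-bicomodule algebra morphism $f\colon A\longrightarrow\bar{A}$, with $B=A^{coH}$ and $\bar{B}=\bar{A}^{coH}$ carrying the restricted left $K$-coactions. Running the computation \eqref{eq:algebra_map_def} with the left $K$-coaction and $\sigma$ in place of the right $H$-coaction and $\gamma$ shows that the underlying linear map $f$ is also an algebra morphism $\tensor[_{\sigma}]{f}{}\colon\tensor[_{\sigma}]{A}{}\longrightarrow\tensor[_{\sigma}]{\bar{A}}{}$, still left $\tensor[_{\sigma}]{K}{}$-colinear and right $H$-colinear, restricting to an algebra map $\tensor[_{\sigma}]{B}{}\longrightarrow\tensor[_{\sigma}]{\bar{B}}{}$. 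Applying Proposition \ref{prop:pushforward} to $\tensor[_{\sigma}]{f}{}$, regarded as a right $H$-comodule algebra morphism, then yields that $\tensor[_{\sigma}]{\bar{B}}{}\subseteq\tensor[_{\sigma}]{\bar{A}}{}$ is a principal $H$-comodule algebra with external symmetry $\tensor[_{\sigma}]{K}{}$, with pushforward strong connection $(\tensor[_{\sigma}]{f}{}\otimes\tensor[_{\sigma}]{f}{})\circ\tensor[_{\sigma}]{\ell}{}$ and with $\tensor[_{\sigma}]{\bar{A}}{}\simeq\tensor[_{\sigma}]{\bar{B}}{}\otimes_{\tensor[_{\sigma}]{B}{}}\tensor[_{\sigma}]{A}{}$; this makes $\tensor[^{\tensor[_{\sigma}]{K}{}}]{\cQ b}{_{H}}(-)$ functorial and gives the horizontal arrow $\tensor[^{\tensor[_{\sigma}]{K}{}}]{\cQ b}{_{H}}(\tensor[_{\sigma}]{f}{})$, sending $[\tensor[_{\sigma}]{A}{}]$ to $[\tensor[_{\sigma}]{\bar{A}}{}]$.

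The key step is the identity $(\tensor[_{\sigma}]{f}{}\otimes\tensor[_{\sigma}]{f}{})\circ\tensor[_{\sigma}]{\ell}{}=\tensor[_{\sigma}]{\bar{\ell}}{}$, where $\bar{\ell}:=(f\otimes f)\circ\ell$ is the pushforward connection of Proposition \ref{prop:pushforward} on the undeformed side: pushing forward the $\sigma$-deformed connection agrees with $\sigma$-deforming the pushed-forward connection. This is immediate from the formula \eqref{eq:strong_conn_left}, since left $K$-colinearity of $f$ gives $f(a)_{(-1)}\otimes f(a)_{(0)}=a_{(-1)}\otimes f(a_{(0)})$, so the scalar $\sigma^{-1}(\tensor{h}{^{\1}_{(-1)}},\tensor{h}{^{\2}_{(-1)}})$ is untouched and the two legs are carried to $f(\tensor{h}{^{\1}_{(0)}})\tensor[^{\sigma}]{\otimes}{}f(\tensor{h}{^{\2}_{(0)}})$. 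Comparing then the formula \eqref{eq:Chern-Weil_map_deformed_left} for $\tensor[_{\sigma}]{\bar{A}}{}$ with the one for $\tensor[_{\sigma}]{A}{}$: every $\sigma^{\pm1}$-factor there depends only on the left $K$-degrees of the translation-map legs, which $f$ preserves, so all of them coincide; and the algebra morphism $\tensor[_{\sigma}]{f}{}$ carries each tensor slot $\tensor{h}{_{(i+1)}^{\2}_{(0)}}\tensor{h}{_{(i)}^{\1}_{(0)}}$ to $f\bigl(\tensor{h}{_{(i+1)}^{\2}_{(0)}}\tensor{h}{_{(i)}^{\1}_{(0)}}\bigr)$. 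Hence $x_n(\tensor[_{\sigma}]{\bar{\ell}}{},h)$ is obtained from $x_n(\tensor[_{\sigma}]{\ell}{},h)$ by applying $\tensor[_{\sigma}]{f}{}|_{\tensor[_{\sigma}]{B}{}}$ to each tensor factor, for every $h\in H^{tr}$; this is precisely the chain map inducing $HC(f)$. Taking the alternating sums with the universal coefficients $(-1)^{\lfloor i/2\rfloor}i!/\lfloor i/2\rfloor!$ gives $\chw_n(\tensor[_{\sigma}]{\bar{\ell}}{})(h)=HC(f)\bigl(\chw_n(\tensor[_{\sigma}]{\ell}{})(h)\bigr)$, i.e. commutativity of the naturality square $HC(f)\circ c_{\tensor[_{\sigma}]{B}{}}=c_{\tensor[_{\sigma}]{\bar{B}}{}}\circ\tensor[^{\tensor[_{\sigma}]{K}{}}]{\cQ b}{_{H}}(\tensor[_{\sigma}]{f}{})$. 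That $c_{\tensor[_{\sigma}]{B}{}}$ is well defined on isomorphism classes follows exactly as in the undeformed case, since a $\tensor[_{\sigma}]{B}{}$-linear right $H$-colinear isomorphism of principal comodule algebras over $\tensor[_{\sigma}]{B}{}$ is the identity on $\tensor[_{\sigma}]{B}{}$ and leaves $x_n$ unchanged.

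I expect the main obstacle to be one of hypotheses rather than of computation: in the deformed picture one cannot allow arbitrary right $H$-comodule algebra maps (as in Proposition \ref{prop:pushforward}) but must require $f$ to be a $(K,H)$-bicomodule algebra map, and it is exactly this $K$-colinearity that renders the $\sigma$-factors appearing in \eqref{eq:Chern-Weil_map_deformed_left} invariant under the pushforward; verifying that nothing else is needed is the heart of the matter. Everything else is the bookkeeping already carried out in the proofs of Theorem \ref{thm:strong_conn_left}, Proposition \ref{prop:deforma_Chern-Weil_external} and Proposition \ref{prop:pushforward_Chern-Weil}, so I would organise the write-up in three steps: (i) construct $\tensor[_{\sigma}]{f}{}$ and invoke Proposition \ref{prop:pushforward} for it; (ii) prove $(\tensor[_{\sigma}]{f}{}\otimes\tensor[_{\sigma}]{f}{})\circ\tensor[_{\sigma}]{\ell}{}=\tensor[_{\sigma}]{\bar{\ell}}{}$; (iii) read off the equality of the $x_n$'s and conclude commutativity of the square.
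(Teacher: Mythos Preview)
Your proposal is correct and follows essentially the same approach as the paper: the paper's own proof is a single sentence, ``The proof follows from that of \ref{prop:pushforward_Chern-Weil}'', with the preceding paragraph already recording (as you do) that one must take $f$ to be a $(K,H)$-bicomodule algebra morphism so that it induces an algebra map $\tensor[_{\sigma}]{A}{}\to\tensor[_{\sigma}]{\bar{A}}{}$ via the left-coaction analogue of \eqref{eq:algebra_map_def}. Your three-step expansion, and in particular the verification $(\tensor[_{\sigma}]{f}{}\otimes\tensor[_{\sigma}]{f}{})\circ\tensor[_{\sigma}]{\ell}{}=\tensor[_{\sigma}]{\bar{\ell}}{}$ using left $K$-colinearity of $f$, is exactly the detail the paper leaves implicit.
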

	\begin{proof}
		The proof follows from that of \ref{prop:pushforward_Chern-Weil}.
	\end{proof}
	
	Finally, combining this last result and the fact that $\chw(\tensor[_{\sigma}]{\ell}{_{\gamma}})=\chw(\tensor[_{\sigma}]{\ell}{})$, we have that Proposition \ref{prop:pushforward_Chern-Weil} is still valid for the combined deformation case.
	
	\bigskip
	\noindent{\bf Acknowledgements.}
	
	I thank Piotr M. Hajac and Chiara Pagani for fruitful discussions. At the beginning of this work, I was partially supported by the EU Horizon Project "Graph Algebras" Grant agreement ID: 101086394. 
	
	\bibliography{bib}{}
	\bibliographystyle{plain} 
\end{document}